\def\hpq0{h^{p,q}_{\leq 0}}
\def\Hpq0{\H_{\leq 0}^{p,q}}
\def\R{{\mathbb R}}
\def\C{{\mathbb C}}
\def\F{{\mathcal F}}
\def\B{{\mathcal B}}
\def\D{\mathcal{D}}
\def\dof{\dot{\phi_t}}
\def\B{{\mathcal B}}
\def\H{{\mathcal H}}
\def\E{{\mathcal E}}
\def\Re{{\rm Re\,  }}
\def\be{\begin{equation}}
\def\ee{\end{equation}}
\newtheorem{thm}{Theorem}[section]
\newtheorem{lma}[thm]{Lemma}
\newtheorem{prop}[thm]{Proposition}
\theoremstyle{definition}
\theoremstyle{remark}
\newtheorem{preremark}{Remark}
\newtheorem{preex}{Example}
\numberwithin{equation}{section}
\title[]
{Symmetrization of plurisubharmonic and convex functions}
\address{Department of Mathematics\\Chalmers University
  of Technology \\
  and Department of Mathematics\\University of G\"oteborg\\S-412 96
  G\"OTEBORG\\SWEDEN,\\} 
\email{ robertb@chalmers.se;bob@chalmers.se}
\author[]{ Robert J Berman and Bo Berndtsson}
\begin{document}

\begin{abstract}
We show that Schwarz symmetrization does not increase the Monge-Ampere energy for $S^1$-invariant plurisubharmonic functions in the ball. As a result we derive a sharp Moser-Trudinger inequality for such functions. We also show that similar results do not hold for other  balanced domains except for complex ellipsoids and discuss related questions for convex functions.
\end{abstract}

\maketitle

\section{Introduction.}

If $\phi$ is a real valued function defined in a domain $\Omega$ in $\R^n$, its {\it Schwarz symmetrization}, see \cite{Kesavan}, is a radial function, $\hat\phi(x)=f(|x|)$, with $f$ increasing, that is equidistributed with $\phi$. The latter requirement means  that for any real $t$, the measure of the corresponding sublevel sets of $\phi$ and $\hat \phi$ are equal, i e
$$
|\{\phi<t\}|=|\{\hat\phi <t\}|=:\sigma(t).
$$
Notice that since $\hat\phi$ is radial, its natural domain of definition is a ball, $B$. Moreover, as $t$ goes to infinity, $\sigma(t)$ tends to the volume of $\Omega$ and also to the volume of $B$.  Thus the   volume of $B$  equals the volume of $\Omega$. 

Since $\phi$ and $\hat\phi$ are equidistributed, any integrals of the form 
$$
\int_\Omega F(\phi) dx
$$
and
$$
\int_B F(\hat\phi) dx,
$$
where $F$ is a measurable function of a real variable, are equal.

One fundamental property of symmetrization is that many other quantities measuring the 'size' of a function, decrease under symmetrization. The prime examples of this are energy integrals
$$
\int_\Omega |\nabla \phi|^p,
$$
for $p\geq 1$, see \cite{Kesavan}. By the Polya-Szeg\"o theorem
\be
\int_B |\nabla \hat\phi|^p\leq\int_\Omega |\nabla \phi|^p  ,
\ee
if $\phi$ vanishes on the boundary of $\Omega$.
This means that e g the study of Sobolev type inequalities
$$
(\int_\Omega |\phi|^q)^{1/q}\leq A(\int_\Omega |\nabla\phi|^p)^{1/p}
$$
is immediately reduced to the radial case, which is  a one-variable problem. 

Before we go on we remark that the inequality (1.1) is strongly related to the isoperimetric inequality. Indeed, if we take $p=1$ and  $\phi$ to be the characteristic function of $\Omega$, then as noted above, the corresponding ball has the same volume as $\Omega$. On the other hand, The $L^1$-norm of $\nabla\phi$ (taken in the sense of distributions), is the area of the boundary of $\Omega$. It follows that the area of the boundary of $\Omega$ is not smaller than the area of the sphere bounding the same volume, which is the isoperimetric inequality. The isoperimetric inequality is also the main ingredient in the proof of (1.1). 

In this  paper we will investigate analogs of (1.1) for another type of energy functional which is of interest in connection with convex and plurisubharmonic functions. In the case of convex functions, the functional is
$$
\E(\phi):=\int (-\phi) MA(\phi),
$$
where $MA(\phi)$ is the {\it Monge-Ampere measure}  of $\phi$, defined as
$$
MA(\phi):=\det(\phi_{j k}) dx
$$
when $\phi$ is twice differentiable. We will only consider this functional when $\phi$ vanishes on the boundary. In the one-dimensional case we can then integrate by parts, so that 
$$
\E(\phi)=\int |d\phi|^2
$$
is  the classical energy. In the general case we can also integrate by parts, and then find that $\E$ is still an $L^2$ -norm of  $d\phi$, but the norm of the differential is measured by the Hessian of $\phi$. This is why this functional makes sense primarily for convex functions. 

We also denote by $\phi$ the corresponding functional for plurisubharmonic functions. $\Omega$ is then a domain in $\C^n$ and we let
$$
\E(\phi):=\frac{1}{n+1}\int_\Omega (-\phi) (dd^c\phi)^n,
$$
called the {\it pluricomplex} or {\it Monge-Ampere} energy. (Notice that our normalization here is slightly different from the real case. It also differs from the definition used in \cite{Ber-Ber} by a sign; here we have chosen signs so that the energy is nonnegative.) It is  defined for plurisubharmonic functions, vanishing on the boundary, satisfying some extra condition so that the complex Monge-Ampere measure is well defined. Just like in the real case, the pluricomplex energy equals the classical (logarithmic) energy when the complex dimension is one.

We start with the case of plurisubharmonic functions. The first problem is that the Schwarz symmetrization of a plurisubharmonic function is not necessarily plurisubharmonic, so the Aubin-Yau energy is not naturally defined.  Indeed, already when the complex dimension is one and we take
$$
\phi(z)=\log|(z-a)/(1-\bar a z)|
$$
to be the Green kernel, $\hat\phi$ is subharmonic only if $a=0$, so that $\phi$ is already radial. (We thank Joaquim Ortega and Pascal Thomas for providing us with this simple example.) Our first observation is that if we consider only functions (and domains) that are {\it $S^1$-invariant}, i e invariant under the map $z\mapsto e^{i\theta}z$, then the symmetrization, $\hat\phi$ , of a plurisubharmonic function, $\phi$, is again plurisubharmonic. Thus it is meaningful to consider its energy and the main result we prove is that 
\be
\E(\hat\phi)\leq\E(\phi)
\ee
when $\Omega$ is a ball. The condition of $S^1$-symmetry of course makes this result trivial when $n=1$, but notice that it is a rather weak restriction in high dimensions as it only means invariance under a one dimensional group.

In section 4, we  study the corresponding problems for convex functions. In that case  convexity is preserved under Schwarz symmetrization (this must be well known but we include a proof in section 4), so we need no extra condition (like $S^1$-invariance). We then show that  for convex functions in the ball, vanishing on the boundary, symmetrization decreases the Monge-Ampere energy, just like in the complex case, and following a similar argument.

It is natural to ask if these symmetrization results also hold for other domains than the ball. In the classical case of the Polya-Szeg\"o theorem one symmetrizes the domain and the function at the same time and it is the symmetrization of the domain that is most clearly linked to the isoperimetric inequality. It turns out that the counterpart to this for Monge-Ampere energy does not hold. Indeed, in section 2 we prove the somewhat surprising fact that our symmetrization result in the complex case holds if and only if the domain  $\Omega$ is an ellipsoid, i e the image of the Euclidean ball under a complex linear transformation. The proof is based on the interpretation of  $S^1$-invariant domains as unit disk bundles of line bundles over projective space, and the proof uses the Bando-Mabuchi uniqueness theorem for K\"ahler-Einstein metrics on $\mathbb{P}^n$. 

In the real case the situation is a little bit more complicated. It was first shown by Tso, \cite{Tso}, that the symmetrization inequality fails in general: There is a convex domain and a convex function vanishing on the boundary of that domain, whose Schwarz symmetrization has larger energy. In section 4 we first give a general form of Tso's example and relate it to Santal\`o's inequality.  
We show that if the symmetrization inequality holds for a certain domain $\Omega$, then $\Omega$ must be a maximizer for the  {\it Mahler volume}, i e for the product of the volume of $\Omega$ with the volume of its polar body, $\Omega^{\circ}$. By (the converse to) Santal\`o's inequality this means that $\Omega$ is an ellipsoid. Thus we arrive at the same conclusion as in the complex case, but this time for a completely different reason, that seems to have no counterpart in the complex setting. We then argue that if we redefine the Monge-Ampere energy in the real setting by dividing by the Mahler volume we get an energy functional that behaves more like in the complex setting, and for which the phenomenon discovered by Tso disappears. That the symmetrization inequality holds for this renormalized energy is thus a weaker statement. Nevertheless we show, by an argument similar to the one used in the complex case, that even the weaker inequality  holds only for ellipsoids.

The origin of this paper is our previous article \cite{Ber-Ber}, where we studied Moser-Trudinger inequalities of the form
\be
\log\int_\Omega e^{-\phi}\leq A \E(\phi) +B,
\ee
for plurisubharmonic functions in $\Omega$ that vanish on the boundary. It follows immediately from (1.2) that, when $\Omega$ is a ball and $\phi$ is $S^1$-invariant, the proof of inequalities of this type can be reduced to the case of radial functions. In \cite{Ber-Ber} we proved a Moser-Trudinger inequality using geodesics in the space of plurisubharmonic functions. Here we will use instead symmetrization, but we point out that the proof of our main result (1.2) also uses geodesics.  By  the classical results of Moser, \cite{Moser}, we then obtain in section 3  a sharpening of the Moser-Trudinger inequality from \cite{Ber-Ber}. Symmetrization was the main tool used by Moser to study the real variable Moser-Trudinger inequality, and it is interesting to note that symmetrization applied to (1.3) leads to the same one variable inequality as in Moser's case. As a result we deduce that if $\phi$ is $S^1$-invariant and has finite energy that we can normalize to be equal to one , then
$$
\int_B e^{n(-\phi)^{(n+1)/n}} < \infty.
$$
We do not know if this estimate holds without the assumption of $S^1$-symmetry.

\section{Symmetrization of plurisubharmonic functions}

The proofs in this section are based on a result from \cite{Berg} that we first recall. We consider a pseudoconvex domain $\D$ in $\C^{n+1}$ and its $n$-dimensional slices 
$$
D_t=\{z\in \C^n; (t,z)\in \D\}
$$
where $t$ ranges over (a domain in ) $\C$. We say that a domain $D$ in $\C^n$ is $S^1$-invariant if $D$ is invariant under the map
$$
z\mapsto e^{i\theta} z=( e^{i\theta}z_1, ...e^{i\theta}z_n)
$$
for all $\theta$ in $\R$. A function (defined in a $S^1$-invariant domain) is $S^1$-invariant if $f(e^{i\theta} z)=f(z)$ for all real $\theta$.

\begin{thm} Assume that $\D$ is a pseudoconvex domain in $\C^{n+1}$ such that all its slices $D_t$ are connected and $S^1$-invariant. Assume also that the origin belongs to $D_t$ when $t$ lies in a domain $U$ in $\C$. Then
$$
\log |D_t| 
$$
is a superharmonic function of $t$ in $U$.
\end{thm}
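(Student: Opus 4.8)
The plan is to reduce the statement to a known positivity/plurisubharmonicity result for fiberwise Bergman-type kernels via the $S^1$-symmetry. Since each slice $D_t$ is $S^1$-invariant and contains the origin, I would first expand in Fourier modes in the $S^1$-action: a holomorphic function on $D_t$ decomposes into homogeneous pieces, and the monomials $z^\alpha$ form an orthogonal basis adapted to the symmetry. The key observation is that the volume $|D_t|$ can be recovered from the ``$S^1$-part'' of the Bergman kernel, or more precisely that $\log|D_t|$ differs by an (pluri)harmonic term from $-\log$ of the value at the diagonal point $(0,0)$ of the Bergman kernel of $\D$ restricted to the fiber. Concretely, for an $S^1$-invariant domain $D$ with $0\in D$, the reproducing kernel at the origin for the space of square-integrable holomorphic functions satisfies $K_D(0,0)=1/|D|$, because the constant function is orthogonal to all nonconstant monomials and has norm $|D|^{1/2}$. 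Thus $\log|D_t| = -\log K_{D_t}(0,0)$.

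Next I would invoke the fundamental theorem of Berndtsson on the plurisubharmonic variation of Bergman kernels: if $\D\subset\C^{n+1}$ is pseudoconvex and $\{D_t\}$ is its family of slices, then $\log K_{D_t}(w(t),w(t))$ is plurisubharmonic (in fact, the associated metric on the direct image bundle has positive curvature) for any holomorphic section $t\mapsto w(t)$ of the family. Taking $w(t)\equiv 0$, which is legitimate precisely because we are told $0\in D_t$ for all $t\in U$, we get that $\log K_{D_t}(0,0)$ is subharmonic in $t\in U$. Combining with the identity from the previous paragraph, $\log|D_t| = -\log K_{D_t}(0,0)$ is superharmonic on $U$, which is exactly the claim.

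The main obstacle, and the place where the $S^1$-invariance is genuinely used, is the identity $K_{D_t}(0,0)=1/|D_t|$: without the symmetry, the Bergman kernel at the origin involves contributions from all monomials and bears no simple relation to the volume, so one only gets subharmonicity of $\log K_{D_t}(0,0)$, not of $-\log|D_t|$. I would therefore be careful to justify that the constant functions are the full ``weight-zero under all radial rescalings combined with $S^1$'' component that controls the volume — i.e. that $\int_{D_t}1 = |D_t|$ and that $1\perp z^\alpha$ in $L^2(D_t)$ for every multi-index $\alpha\neq 0$, the latter being immediate from averaging over the $S^1$-action $z\mapsto e^{i\theta}z$. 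A secondary technical point is regularity: one should either assume the $D_t$ are bounded with enough boundary regularity that $K_{D_t}$ is finite and varies reasonably, or phrase the conclusion in the sense of the upper semicontinuous regularization and approximate $\D$ from inside by smoothly bounded pseudoconvex subdomains, passing to the limit in the (sub)harmonicity, which is a standard maneuver.
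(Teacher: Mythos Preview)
Your proposal is correct and follows exactly the paper's own argument: the paper deduces Theorem~2.1 from Berndtsson's result that $\log B_t(z,z)$ is plurisubharmonic in $\D$, together with the identity $B_t(0,0)=|D_t|^{-1}$ forced by the $S^1$-invariance. One small imprecision worth tightening: for a domain that is only $S^1$-invariant (not fully Reinhardt) the monomials need not form an orthogonal basis, so the clean justification of $K_{D_t}(0,0)=1/|D_t|$ is via the decomposition of $f\in A^2(D_t)$ into homogeneous components $f=\sum_{k\ge 0} f_k$ under the circle action, noting that $f_0$ is holomorphic, $S^1$-invariant and hence constant on the connected slice, while $f_k(0)=0$ for $k\ge 1$.
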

Theorem 2.1 is a consequence of the main result in \cite{Berg} which says that if $B_t(z,z)$ is the Bergman kernel on the diagonal for the domain $D_t$, then $\log B_t(z,z)$ is plurisubharmonic in $\D$. The hypotheses on $D_t$ in the theorem imply that 
$$
B_t(0,0)=|D_t|^{-1}
$$
which gives the theorem. Theorem 2.1 can be seen as a complex variant of the (multiplicative form of) the {\it Brunn-Minkowski inequality}, which says that if $\D$ is instead convex in $\R^{n+1}$, and the $n$-dimensional slices are defined in the same way then 
$$
\log |D_t|
$$
is a concave function of $t$, without any extra assumptions on the slices. The Brunn-Minkowski inequality  replaces the use of Theorem 2.1 when we later consider energies of convex functions. Then we will use the stronger fact that even $\sigma^{1/n}$ is concave in the real setting. 

In the proofs below we will use the following lemma on symmetrizations of subharmonic functions.
\begin{lma} Let $u$ be a smooth subharmonic function defined in an open set $U$  in $\R^N$, and assume that $u$ vanishes on the boundary of $U$. Let
$$
\sigma(t):=\{x; u(x)<t\}|
$$
for $t<0$. Then $\sigma$ is  strictly increasing on the interval $(\min u, 0)$ and the Schwarz symmetrization of $u$, $\hat u$, equals $ g(|x|)$ where 
$$
g(r)=\sigma^{-1}(c_N r^N)
$$
where $c_N$ is the volume of the unit ball in $\R^N$, when 
$$
c_N r^N>|U_{\min(u)}|.
$$
 When   
$$
c_N r^N\leq|U_{\min(u)}|,
$$ 
$g(r)=\min(u)$
\end{lma}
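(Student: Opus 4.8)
The plan is to dispose of the three assertions — strict monotonicity of $\sigma$, the formula $g(r)=\sigma^{-1}(c_N r^N)$ on the outer range of $r$, and $g(r)=\min u$ on the inner range — in that order, isolating the single step that really uses subharmonicity.

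First some harmless reductions: I may take $U$ connected (otherwise argue component by component, the boundary values still being $0$) and $u\not\equiv0$ (otherwise $\min u=0$ and $(\min u,0)$ is empty). Since $u$ is subharmonic and vanishes on $\partial U$, the maximum principle gives $u\le0$, and if $u$ attained $0$ at an interior point the strong maximum principle would force $u\equiv0$; hence $u<0$ on $U$ and $\{u=0\}\cap U=\emptyset$. For each $c<0$ the set $\{u\le c\}$ is a compact subset of $U$ (a sequence in it cannot converge to $\partial U$, where $u=0$), so $m:=\min_U u$ is attained, $m<0$, and $\sigma(0^-)=\abs{U}$, $\sigma(m^+)=\abs{\{u\le m\}}=\abs{\{u=m\}}=\abs{U_{\min u}}$. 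That $\sigma$ has no flat interval inside $(m,0)$ is then immediate from continuity alone: for $m<t_1<t_2<0$, the continuous function $u$ on the connected set $U$ attains $m$ and also values arbitrarily close to $0$, hence some value in $(t_1,t_2)$ at an interior point, so $\{t_1<u<t_2\}$ is a non-empty open set and $\sigma(t_2)-\sigma(t_1)\ge\abs{\{t_1<u<t_2\}}>0$.

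The substantive point, and the only place subharmonicity is needed, is that $\sigma$ is in fact \emph{continuous} on $(m,0)$, i.e.\ $\abs{\{u=c\}}=0$ for every $c\in(m,0)$; this is exactly what turns $\sigma^{-1}$ into a genuine inverse on the range of $\sigma$. I would argue by contradiction. If $E:=\{u=c\}$ had positive measure, then $\nabla u=0$ a.e.\ on $E$ (near a point of $E$ with $\nabla u\ne0$, $E$ is a $C^1$ hypersurface, hence Lebesgue-null), and in fact $u$ is flat to infinite order at a.e.\ point of $E$: at a density point of $E$ the lowest nonzero homogeneous term of the Taylor expansion of $u$ would be of constant sign on some open cone, along which $u\ne c$ nearby, contradicting density one. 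On the other hand $\{u<c\}$ is a non-empty open set compactly contained in $U$, with $u<c$ inside and $u=c$ on its boundary, so at each boundary point admitting an interior ball the Hopf boundary lemma forces $\nabla u\ne0$; letting such balls shrink onto a point of $\partial\{u<c\}\subseteq E$ at which $\nabla u=0$ (using continuity of $\nabla u$), together with the strong maximum principle applied on small balls to rule out the part of $E$ consisting of interior local minima of $u$ at the non-minimal level $c$, yields the contradiction. I expect this to be the only genuinely delicate step: controlling how the Hopf constant degenerates as the interior balls shrink, and disposing of the local-minimum part of $E$, will need care.

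Granting this, the explicit form of $\hat u$ is bookkeeping. By definition $\hat u=g(\abs{x})$ with $g$ non-decreasing on $[0,R^*]$ where $\abs{B(0,R^*)}=\abs{U}$, and $\abs{\{\hat u<t\}}=\sigma(t)$ for all $t$ (equidistribution). For each $t$ the set $\{\hat u<t\}$ is a ball of volume $\sigma(t)$ (the bounding sphere being null), so $g(r)<t\iff c_N r^N<\sigma(t)$, i.e.\ $g(r)=\sup\{t:\sigma(t)\le c_N r^N\}$. If $c_N r^N>\abs{U_{\min u}}=\sigma(m^+)$, then since $\sigma$ is continuous and strictly increasing on $(m,0)$ with $\sigma(0^-)=\abs{U}>c_N r^N$, the value $c_N r^N$ lies in the range of $\sigma$ and $g(r)=\sigma^{-1}(c_N r^N)$. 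If instead $c_N r^N\le\abs{U_{\min u}}$, then $\sigma(t)=0$ for $t\le m$ while $\sigma(t)>\sigma(m^+)\ge c_N r^N$ for every $t>m$ (strict monotonicity), so the supremum equals $m$ and $g(r)=\min u$; equivalently the core ball on which $\hat u$ equals its minimum has volume $\abs{\{\hat u=m\}}=\abs{\{u=m\}}=\abs{U_{\min u}}$, as it should. So, apart from the continuity of $\sigma$, the argument is only the topology of continuous functions on connected sets together with the formal properties of equidistribution; the heart of the matter is the statement that a smooth subharmonic function has no level set of positive measure at a height strictly between its minimum and its boundary value.
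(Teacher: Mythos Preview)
Your argument for strict monotonicity is correct and more elementary than the paper's: the intermediate value theorem on the connected domain already gives $|\{t_1<u<t_2\}|>0$ for $m<t_1<t_2<0$, with no use of subharmonicity whatsoever. The paper proceeds differently. Assuming $\sigma$ constant on $(t,t+\epsilon)$, it picks by Sard's lemma a regular value $s\in(t,t+\epsilon)$, so that $\partial U_s$ is smooth; the Hopf lemma (invoked to exclude the degenerate case that $u$ is constant on $U_s$, which would force $s\le m$) gives $|\nabla u|>0$ on $\partial U_s$, and the coarea formula then yields $\sigma'(s)=\int_{\partial U_s}dS/|\nabla u|>0$, contradicting constancy of $\sigma$ on $(t,t+\epsilon)$. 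So the two routes to strict monotonicity are genuinely different; yours is shorter, while the paper's makes the subharmonicity enter explicitly through Hopf and coarea.

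The gap in your proposal is the attempted proof that $\sigma$ is \emph{continuous} on $(m,0)$, i.e.\ that $|\{u=c\}|=0$ for every such $c$. Your density-point reasoning does show that all partial derivatives of $u$ vanish a.e.\ on $E=\{u=c\}$, but the closing step does not work as written: subharmonic functions satisfy a maximum principle, not a minimum principle, so the strong maximum principle does not forbid interior local minima of $u$ at a non-minimal level $c$; and the Hopf lemma only speaks at points of $\partial\{u<c\}$ admitting an interior ball, a set that need have nothing to do with the density points of $E$ and whose Hopf constants you yourself note may degenerate as the balls shrink. The paper does not attempt to prove continuity at all --- its one-line derivation simply reads $\sigma(g(r))=|\{g(|x|)<g(r)\}|=c_Nr^N$ and inverts --- so for the lemma as stated and as used downstream it is cleanest to read $\sigma^{-1}$ as the monotone (generalized) inverse of the strictly increasing $\sigma$ and drop the continuity discussion entirely.
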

\begin{proof} Denote by  $U_t$ the domain  where $u<t$. Assume  that $|U_t|=|U_{t+\epsilon}|$ for some $\epsilon>0$. By Sard's lemma, some $s$ between $t$ and $t+\epsilon$ is a regular value of $u$, so the boundary of $U_s$ is smooth. By the Hopf lemma, the gradient of $u$ does not vanish on the boundary of $U_s$ unless $u$ is constant in $U_s$. In the latter case $s\leq\min u$. If this is not the case, i e if $s>\min u$, the coarea formula gives 
$$
\sigma'(s)=\int_{\partial U_s} dS/|\nabla u|>0
$$
which contradicts that $\sigma$ is constant  on $(t, t+\epsilon)$. This proves the first part of the lemma. The second part follows since
$$
\sigma((g(r))=|\{ g(|x|)<g(r)\}|= c_N r^N.
$$
\end{proof}

We can now easily prove the next basic result. We say that a domain $\Omega$ in $\C^n$ is {\it balanced} if for any $\lambda$ in $\C$ with $|\lambda|\leq 1$ and $z$ in $\Omega$, $\lambda z$ also lies in $\Omega$.
\begin{thm} Let $\Omega$ be a balanced domain in $\C^n$. Let $\phi$ be an $S^1$-invariant plurisubharmonic function in $\Omega$. Then $\hat\phi$, the Schwarz symmetrization of $\phi$ is plurisubharmonic.
\end{thm}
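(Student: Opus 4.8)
The plan is to use Theorem~2.1 to reduce the assertion to the one–variable statement that $s\mapsto\log\sigma(s)$ is concave, where $\sigma(s)=\abs{\{\phi<s\}}$. Since $\hat\phi$ is radial it has the form $\hat\phi(z)=g(\abs z)$ with $g$ increasing, and by the description of the Schwarz symmetrization (Lemma~2.2 in real dimension $N=2n$) the profile satisfies $g(r)=\sigma^{-1}(c_{2n}r^{2n})$. A radial increasing function $g(\abs z)$ on a ball in $\Cn$ is plurisubharmonic if and only if $\rho\mapsto g(e^{\rho})$ is convex in $\rho=\log\abs z$: one direction because $\log\abs z$ is plurisubharmonic and a convex increasing function of a plurisubharmonic function is plurisubharmonic, the other by restricting to a complex line through the origin, where a radial subharmonic function is convex as a function of the logarithm of the radius. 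So it suffices to show $G(\rho):=g(e^{\rho})$ is convex. From $\sigma(G(\rho))=c_{2n}e^{2n\rho}$ we get $\log\sigma(G(\rho))=\log c_{2n}+2n\rho$; hence if $L:=\log\sigma$ is concave and increasing, then $G=L^{-1}\!\paren{\log c_{2n}+2n\rho}$ is the composition of the convex increasing function $L^{-1}$ with an affine map, so it is convex. (On the set where $\phi$ attains its minimum, $\hat\phi$ is the constant $\min\phi$, which is plurisubharmonic and matches convexly onto $G$.)

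It remains to prove that $L=\log\sigma$ is concave, and this is where Theorem~2.1 enters. I would apply it to the Hartogs-type domain
$$
\D=\setof{(t,z)\in\C\times\Omega:\ \phi(z)+\log\abs t<0},
$$
whose slices are $D_t=\setof{z\in\Omega:\ \phi(z)<-\log\abs t}=\setof{\phi<s}$, with $s=-\log\abs t$. These slices are $S^1$-invariant because $\phi$ is, and in fact each $D_t$ is again balanced: if $z\in D_t$ and $\abs\lambda\le1$ then $\lambda z\in\Omega$ since $\Omega$ is balanced, while along the complex line $\C z$ the function $\phi$ is $S^1$-invariant and subharmonic, hence nondecreasing in the radius, so $\phi(\lambda z)\le\phi(z)<s$; in particular $D_t$ is connected. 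The same monotonicity gives $\phi(0)=\min_\Omega\phi=:m$, so $0\in D_t$ precisely for $t$ in the disc $U=\setof{\abs t<e^{-m}}$ (all of $\C$ if $m=-\infty$). Theorem~2.1 then says that $\log\abs{D_t}$ is superharmonic on $U$. Since $D_t$, and hence $\abs{D_t}=\sigma(-\log\abs t)$, depends on $t$ only through $\abs t$, writing this radial superharmonic function in the variable $\rho=\log\abs t$ and using that a radial function $F(\log\abs t)$ is superharmonic iff $F$ is concave, we conclude that $\rho\mapsto\log\sigma(-\rho)$ is concave, i.e. $\log\sigma$ is concave on $(m,\sup_\Omega\phi)$. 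This completes the argument.

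The one step that needs real care is the verification of the hypotheses of Theorem~2.1 for $\D$, in particular pseudoconvexity. Note that $\D$ is exactly the Hartogs domain $\setof{\abs t<e^{-\phi(z)}}$ over the base $\Omega$ with plurisubharmonic exponent $\phi$, so it is pseudoconvex as soon as $\Omega$ is pseudoconvex (which, for instance, is automatic if $\phi$ is an exhaustion of $\Omega$); I would therefore first treat that case, where the argument above is clean, and regard the merely balanced case as the main technical obstacle. A second, purely technical, point is that in order to invert $\sigma$ and quote Lemma~2.2 one wants $\phi$ smooth and $\sigma$ strictly increasing; this can be arranged by approximating $\phi$ from above by smooth strictly plurisubharmonic $S^1$-invariant functions, the plurisubharmonicity of the symmetrization being stable under such decreasing limits.
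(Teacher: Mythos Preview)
Your proof is correct and follows essentially the same route as the paper; the only difference is that the paper applies Theorem~2.1 to $\D=\{(\tau,z)\in\C\times\Omega:\phi(z)-\Re\tau<0\}$ instead of your Hartogs domain, so that $\log|D_\tau|=\log\sigma(\Re\tau)$ depends only on $\Re\tau$ and its superharmonicity gives concavity of $\log\sigma$ directly. The pseudoconvexity issue you carefully flag applies equally to the paper's choice of $\D$ (one still needs $\Omega$ pseudoconvex for $\C\times\Omega$ to be), but the paper simply asserts pseudoconvexity without comment.
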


\begin{proof} We may of course assume that $\phi$ is smooth so that the previous lemma applies. By definition, $\hat\phi$ can be written
$$
\hat\phi(z)=f(\log|z|),
$$
so what we need to prove is that $f$ is convex. Since $\phi$ and $\hat\phi$ are equidistributed, for any real $t$,
$$
\sigma(t):=|\{z\in\Omega; \phi(z)<t\}|=|\{z\in\Omega; \hat\phi(z)<t\}=|\{z; |z|<\exp(f^{-1}(t))\}|.
$$
Hence
$$
f^{-1}(t)=n^{-1}\log\sigma(t) +b_n.
$$
Since $\sigma$ is increasing, $f^{-1} $ is also increasing. Therefore $f$ is convex precisely when $f^{-1}$ is concave, i e when $\log\sigma$ is concave. 

Consider the domain in $\C^{n+1}$
$$
\D=\{ (\tau,z); z\in \Omega \quad \text{and} \quad \phi(z)-\Re \tau <0\}.
$$
Then, if $t=\Re \tau$, $\sigma(t)=|D_\tau|$. Note that $\D$ is pseudoconvex since $\phi-\Re\tau$ is plurisubharmonic and we claim that $\D$ also satisfies all the other conditions of Theorem 2.1 . 

Let $z$ lie in $D_\tau$ for some $\tau$. The function $\gamma(\lambda):=\phi(\lambda z)$ is then subharmonic in the unit disk, and moreover it is radial, i e
$$
\gamma(\lambda)=g(|\lambda|),
$$
where $g$ is increasing. Therefore the whole disk $\{\lambda z\}$ is contained in $D_\tau$. In particular the origin lies in any $D_\tau$, and the origin can be connected with $z$ by a curve, so ${\D}_{\tau}$ is connected. Thus Theorem 2.1 can be applied and we conclude that 
$$
\log\sigma(\Re\tau)=\log |D_\tau|
$$
is a superharmonic function of $\tau$. Since this function only depends on $\Re \tau$ it is actually concave, and the proof is complete.
\end{proof}

The next theorem is the main result of this paper, and here we need to assume that $\Omega$ is a ball. See the remarks below  for a discussion of the problem with considering more general domains.
\begin{thm} Let $\phi$ be plurisubharmonic in the unit ball, and assume that $\phi$ extends continuously to the closed ball with zero boundary values. Assume also that $\phi$ is $S^1$-invariant, and let $\hat\phi$ be the Schwarz symmetrization of $\phi$. Then
$$
\E(\hat\phi)\leq\E(\phi).
$$
\end{thm}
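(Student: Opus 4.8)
The plan is to reduce the inequality $\E(\hat\phi)\le\E(\phi)$ to a one-variable statement about the function $\sigma(t)=|\{\phi<t\}|$, exploiting the $S^1$-invariance exactly as in the proof of Theorem 2.3. Since $\phi$ is $S^1$-invariant we may write, after the substitution $s=\log|z|$, that $\phi$ corresponds to a function of the ``radial variable'' together with the angular directions; the symmetrized function is $\hat\phi(z)=f(\log|z|)$ with $f$ convex, and from the equidistribution identity $f^{-1}(t)=n^{-1}\log\sigma(t)+b_n$ derived in Theorem 2.3. The first step is therefore to express $\E(\hat\phi)$ purely in terms of $\sigma$: for a radial psh function $f(\log|z|)$ on the unit ball with zero boundary values, $(dd^c\hat\phi)^n$ is (up to constants) $f'(s)^{n-1}f''(s)$ times Lebesgue measure in the $s=\log|z|$ variable against the rotation-invariant measure on the sphere, so $\E(\hat\phi)$ becomes an explicit functional of $f$, hence of $\sigma$ via the above relation.

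The second step is to produce a matching lower bound for $\E(\phi)$ in terms of the \emph{same} $\sigma$. Here is where geodesics enter, as the introduction promises. I would realize $\phi$ as (the boundary value of) a subgeodesic ray or, more precisely, pass to the domain $\D=\{(\tau,z): z\in\Omega,\ \phi(z)-\Re\tau<0\}$ used in Theorem 2.3 and slice it. By Theorem 2.1, $\log\sigma(\Re\tau)=\log|D_\tau|$ is concave in $\Re\tau$. The Monge-Amp\`ere energy $\E(\phi)$ should be expressible — after an integration by parts in the $t$ variable, writing $\E(\phi)=\int_0^{\infty}\!\!\big(\text{something involving }(dd^c\phi)^n\text{ restricted to }\{\phi<t\}\big)\,dt$ — in a form where the concavity of $\log\sigma$ gives precisely $\E(\phi)\ge(\text{the same functional of }\sigma)=\E(\hat\phi)$. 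The key analytic input is a Brunn--Minkowski/Berndtsson-type inequality controlling how the Monge--Amp\`ere mass of $\phi$ on sublevel sets relates to $\sigma'(t)$ and $\sigma(t)$; the comparison $\sigma_{\hat\phi}=\sigma_\phi$ together with superharmonicity of $\log\sigma$ forces the energy of the rearrangement to be smallest.

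Concretely, the chain I expect is: (i) write $\E(\phi)=\frac{1}{n+1}\int(-\phi)(dd^c\phi)^n$ and use the layer-cake / coarea decomposition together with the fact that, for fixed $t$, $\int_{\{\phi<t\}}(dd^c\phi)^n$ is comparable to a capacity-type quantity that the extremal (radial) competitor minimizes for given volume $\sigma(t)$; (ii) integrate in $t$; (iii) invoke concavity of $\log\sigma$, equivalently convexity of $f^{-1}$'s inverse, to compare with the radial model. An alternative and probably cleaner route, mirroring the convex case sketched for section 4: interpret $t\mapsto\phi-\Re\tau$ as a curve of psh functions, note that $\E$ is related to the Monge--Amp\`ere energy along this curve, and use that $\E$ restricted to geodesics is affine while $\log\sigma$ being concave says the ``volume'' of the geodesic connecting $\phi$ to its boundary value dominates that of the radial geodesic.

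The main obstacle, I expect, is step (i): identifying the right one-variable functional and proving that the radial function is its minimizer among $S^1$-invariant psh competitors with prescribed $\sigma$. This is not a pointwise inequality between densities — it genuinely uses the global positivity of $dd^c\phi$ — and it is exactly the place where the ball (rather than a general balanced domain) is needed, since the computation of $(dd^c\hat\phi)^n$ as an explicit expression in $f$ and the passage from $\sigma$ back to a psh function both rely on the model geometry. The superharmonicity from Theorem 2.1 (and, in the convex case of section 4, the stronger concavity of $\sigma^{1/n}$) is the engine, but fitting it to the energy rather than to a single sublevel set is the delicate part. I would carry out the radial computation first to pin down the target functional, then work backwards to see which inequality on $\sigma$ is needed, and only then invoke Theorem 2.1.
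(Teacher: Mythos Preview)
Your proposal does not yet contain a proof; it identifies a target --- an inequality between one-variable functionals of $\sigma$ --- but the mechanism you propose for establishing it (a layer-cake decomposition plus a level-set Monge-Ampere/isoperimetric comparison) is exactly the step that fails to materialize. There is no inequality of the form ``$\int_{\{\phi<t\}}(dd^c\phi)^n$ dominates the corresponding quantity for the radial competitor with the same $\sigma(t)$,'' and no such pointwise-in-$t$ statement is used. Your alternative route via the curve $\tau\mapsto\phi-\Re\tau$ is also off: that is an affine translate, not a geodesic, and it does not connect $\phi$ to anything radial.

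The paper's argument is structurally different. One fixes a \emph{radial} reference $\phi_0$ satisfying an equation $(dd^c\phi_0)^n=F(\phi_0)$ for some smooth $F$ (any smooth strictly increasing radial function on the ball does), connects it to $\phi_1=\phi$ by a geodesic $\phi_t$ in the space of psh functions with zero boundary values, and then symmetrizes the \emph{entire curve}: $t\mapsto\hat\phi_t$. The crux (Proposition~2.6, via Lemma~2.5 applied to the two-variable concave function $(s,t)\mapsto\log|\{\phi_s<t\}|$ furnished by Theorem~2.1) is that this symmetrized curve is again a \emph{subgeodesic}. Now set $g(t)=\E(\phi_t)$ and $h(t)=\E(\hat\phi_t)$. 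Since $\phi_0$ is already radial, $g(0)=h(0)$; since $g$ is affine and $h$ is concave, the inequality $g(1)\geq h(1)$ follows once $g'(0)=h'(0)$. But
\[
g'(0)=\int(-\dot\phi_0)\,(dd^c\phi_0)^n=\int(-\dot\phi_0)\,F(\phi_0)=\frac{d}{dt}\Big|_{t=0}\int(-G(\phi_t)),\qquad G'=F,
\]
and likewise for $h$; equidistribution of $\phi_t$ and $\hat\phi_t$ gives $\int G(\phi_t)=\int G(\hat\phi_t)$ for every $t$, so the derivatives agree.

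The ideas you are missing are thus twofold: (a) symmetrize a whole geodesic rather than a single function, and use the \emph{two-parameter} Brunn--Minkowski/Bergman input to see that the result is a subgeodesic; (b) anchor the geodesic at a radial endpoint whose Monge-Ampere measure is a function of $\phi_0$ itself, so that the first variation of $\E$ there becomes a functional of the distribution of $\phi_t$ alone and is therefore unchanged by symmetrization. Point (b) is also exactly where the ball hypothesis enters: on a general balanced domain no function of the form $f(u_\Omega)$ satisfies $(dd^c\phi_0)^n=F(\phi_0)$ unless $\Omega$ is an ellipsoid, as the paper goes on to show.
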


In the proof of Theorem 2.1 we used the geometrically obvious fact that the inverse of an increasing concave function is convex. We will need a generalization of this that we state as a lemma.
\begin{lma}
Let $a(s,t)$ be a concave function of two real variables. Assume
$a$ is strictly increasing with respect to $t$, and let $t=k(s,x)$
be the inverse of $a$ with respect to the second variable for $s$
fixed, so that $a(s,k(s,x))=x$. Then $k$ is convex as a function
of both variables $s$ and $x$. \end{lma}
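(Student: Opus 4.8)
The plan is to argue via epigraphs, which avoids any differentiation and hence any smoothness hypothesis on $a$. Recall that a function is convex exactly when its epigraph is a convex set, and dually that $a$ being concave is equivalent to its hypograph $\setof{(s,t,y):y\le a(s,t)}$ being convex. So the whole proof reduces to identifying the epigraph of $k$ with a set cut out directly by $a$, and then reading off its convexity from concavity of $a$.

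The key elementary observation is that, because $a(s,\cdot)$ is strictly increasing, for $s$ fixed and $x$ in the range of $a(s,\cdot)$ one has the equivalence
$$
t\ge k(s,x)\quad\Longleftrightarrow\quad a(s,t)\ge a(s,k(s,x))=x .
$$
Consequently the epigraph of $k$ is precisely $\setof{(s,x,t):a(s,t)\ge x}$ (intersected with $\Dom(k)\times\R$, which in the intended applications requires no trimming at all). Convexity of this set is then immediate: if $a(s_i,t_i)\ge x_i$ for $i=0,1$ and $\theta\in[0,1]$, writing $(s_\theta,x_\theta,t_\theta)$ for the corresponding convex combination, concavity of $a$ gives
$$
a(s_\theta,t_\theta)\ge\theta a(s_0,t_0)+(1-\theta)a(s_1,t_1)\ge\theta x_0+(1-\theta)x_1=x_\theta ,
$$
so the combination again lies in the set. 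Hence $\mathrm{epi}(k)$ is convex, i.e. $k$ is convex, as claimed. (When $a$ is $C^2$ one can alternatively differentiate $a(s,k)=x$ implicitly to get $k_x=1/a_t$ and $k_s=-a_s/a_t$, and a short computation writes the Hessian of $k$ in terms of the negative semidefinite Hessian of $a$ and the positive factor $a_t$; but the epigraph argument is cleaner and more robust.)

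The only mildly delicate point, and the one I would be careful to spell out, is the bookkeeping of domains: one should record on which set $k$ is defined — the pairs $(s,x)$ with $x$ in the range of $a(s,\cdot)$ — and check this set is convex, so that the implication ``epigraph convex $\Rightarrow$ function convex'' is legitimate. This is again a direct consequence of concavity of $a$ together with the strict monotonicity in $t$, so it presents no genuine obstacle; it is just a point to state carefully rather than to overcome. This lemma is exactly the two-variable counterpart of the one-variable fact used in the proof of Theorem 2.5, that the inverse of an increasing concave function is convex, and the proof above specializes to that statement when $a$ does not depend on $s$.
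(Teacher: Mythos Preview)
Your proof is correct, and the underlying computation is the same as the paper's, but the packaging is different. The paper argues by contradiction: if $k$ failed midpoint convexity, then after translating so that the midpoint is the origin one would have $k(0,0)>(k(p)+k(-p))/2$ for some $p=(s_0,x_0)$; applying $a(0,\cdot)$ (strictly increasing) and then concavity of $a$ yields
\[
0=a(0,k(0,0))>a\bigl(0,\tfrac{k(p)+k(-p)}{2}\bigr)\ge \tfrac12\bigl(a(s_0,k(p))+a(-s_0,k(-p))\bigr)=\tfrac{x_0-x_0}{2}=0,
\]
a contradiction. Your epigraph argument is this same two-point inequality read forwards rather than backwards: the equivalence $t\ge k(s,x)\Leftrightarrow a(s,t)\ge x$ is precisely what the paper's strict-monotonicity step encodes, and your convexity check on $\{a(s,t)\ge x\}$ is the concavity step. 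What you gain is a cleaner, coordinate-free statement in the language of convex analysis, together with an explicit acknowledgement of the domain issue that the paper leaves implicit; what the paper's version gains is brevity and the avoidance of any discussion of $\Dom(k)$, which in the intended application (where $a=\log|\{\phi_s<t\}|$ and one only needs $k$ on the relevant range) is harmless.
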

\begin{proof}
Assume not. After choosing a new origin, there is then a point $p=(s_{0},x_{0})$
such that \[
k(0,0)>(k(p)+k(-p))/2.\]
 Since $a$ is strictly increasing with respect to $t$ \[
0=a(0,k(0,0))>a((s_{0}-s_{0})/2,(k(p)+k(-p))/2)\geq\]
 \[
\geq(a(s_{0},k(p))+a(-s_{0},k(-p)))/2=(x_{0}-x_{0})/2=0.\]
 This is a contradiction . 
\end{proof}

In the sequel  we shall use well known facts about geodesics and subgeodesics in the space of plurisubharmonic functions on the ball, see \cite{Ber-Ber} for proofs. These are curves
$$
\phi_t(z)=\phi(t,z)
$$
where $t$ is real parameter, here varying between 0 and 1. By definition, $\phi_t$ is a subgeodesic if $\phi(\Re \tau,z)$ is plurisubharmonic as a function of $(\tau,z)$, and it is a geodesic if moreover this plurisubharmonic function solves the homogenuous complex Monge-Ampere equation
$$
(dd^c\phi)^{n+1}=0.
$$
We  also assume throughout that $\phi_t$ vanishes for $|z|=1$. It is not hard to see that if $\phi_0$ and $\phi_1$ are two continuous plurisubharmonic functions in the ball, vanishing on the boundary, then they can be connected with a bounded geodesic, see \cite{Ber-Ber}. Here we shall first assume that $\phi_0$ and $\phi_1$ are smooth and can be connected by a geodesic of class $C^1$, and then get the inequality for general $\phi$ by approximation. 

We will  use the following three facts, for which we refer to \cite{Ber-Ber}. First, $\E(\phi_t)$ is an {\it affine} function of $t$ along any bounded geodesic. Second, $\E(\phi_t)$  is {\it concave} along a bounded subgeodesic. On the other hand, if $\phi_0$ and $\phi_1$ are plurisubharmonic and vanish on the boundary and we let $\phi_t=t\phi_1 +(1-t)\phi_0$ for $t$ between 0 and 1, then $\E(\phi_t)$ is {\it convex}. Finally, if $\phi_t$ is of class $C^1$, then $\E(\phi_t)$ is differentiable  with derivative
\be
\frac{d}{dt}\E(\phi_t)=\int_B-\dof (dd^c_z\phi_t)^n.
\ee

In the proof of Theorem 2.3 we fix a plurisubharmonic $\phi$ in the ball, that we assume smooth. We put $\phi=\phi_1$ and connect it with $\phi_0$, chosen to satisfy an equation 
$$
(dd^c\phi_0)^n= F(\phi_0),
$$
where $F$ is some smooth function of a real variable. Actually, it is not hard to check that any smooth increasing radial function satisfies such an equation.
 We also first assume that $\phi=\phi_1$ and $\phi_0$ can be connected with a $C^1$ geodesic $\phi_t$. We then take the Schwarz symmetrization of each $\phi_t$ and obtain another curve $\hat\phi_t$. The next proposition shows that the new curve is a subgeodesic. 
\begin{prop} Let $\phi_{t}$ be a subgeodesic of $S^{1}$-invariant
plurisubharmonic functions. Then $\hat{\phi}_{t}$ is also a subgeodesic. 
\end{prop}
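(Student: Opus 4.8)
The plan is to mimic the proof of Theorem 2.2, upgrading the one-variable concavity of the log-volume function used there to a \emph{joint} concavity in two real variables, and then to invoke Lemma 2.4. Recall first from the proof of Theorem 2.2 that for each fixed $t$ the symmetrization can be written $\hat\phi_t(z)=f_t(\log|z|)$ with $f_t$ increasing, and that equidistribution forces
\[
f_t^{-1}(s)=n^{-1}\log\sigma_t(s)+b_n,\qquad \sigma_t(s):=|\{z\in B;\ \phi_t(z)<s\}|,
\]
with $b_n$ a dimensional constant. Since each $\hat\phi_t$ is automatically $S^1$-invariant, plurisubharmonic (by Theorem 2.2) and vanishes on $\partial B$, the curve $\hat\phi_t$ is a subgeodesic precisely when $\hat\phi(\Re\tau,z)=f_{\Re\tau}(\log|z|)$ is plurisubharmonic on $\{0<\Re\tau<1\}\times B$. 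As in Theorem 2.2 I would first assume that each $\phi_t$ is smooth so that Lemma 2.2 applies (the general case by approximation); then $\sigma_t$ is strictly increasing on $(\phi_t(0),0)$.

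The first and main step is to show that $\log\sigma_t(s)$ is jointly concave in the two real variables $(t,s)$. For this I would form the domain in $\C^{n+2}$, with coordinates $(\tau,w,z)$, $\tau,w\in\C$, $z\in\C^n$,
\[
\D=\{(\tau,w,z);\ 0<\Re\tau<1,\ z\in B \text{ and } \phi(\Re\tau,z)-\Re w<0\}.
\]
By the subgeodesic hypothesis $\phi(\Re\tau,z)-\Re w$ is plurisubharmonic, so $\D$ is pseudoconvex. Its slices over the two-dimensional base variable $(\tau,w)$ are $D_{(\tau,w)}=\{z\in B;\ \phi_{\Re\tau}(z)<\Re w\}$, with volume $\sigma_t(s)$ for $t=\Re\tau$, $s=\Re w$. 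Exactly as in the proof of Theorem 2.2 --- using that $\lambda\mapsto\phi_t(\lambda z)$ is radial and increasing --- every nonempty slice is $S^1$-invariant, star-shaped about the origin (hence connected) and contains $0$; moreover the set of $(t,s)$ for which the slice is nonempty is $\{0<t<1,\ s>\phi_t(0)\}$, which is convex since $t\mapsto\phi_t(0)=\phi(\Re\tau,0)$ is convex. Restricting $\D$ to an arbitrary complex line in the $(\tau,w)$-space and applying Theorem 2.1 (or directly the positivity theorem of \cite{Berg}, which holds with a base of arbitrary dimension) then shows that $-\log\sigma_t(s)$ is plurisubharmonic in $(\tau,w)$; since it depends only on $(\Re\tau,\Re w)$ it is convex there, so $\log\sigma_t(s)$ is concave on the convex region $\{0<t<1,\ \phi_t(0)<s<0\}$.

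It then remains to transfer this to $\hat\phi(\Re\tau,z)$. Put $a(t,s):=n^{-1}\log\sigma_t(s)+b_n$; by the previous step $a$ is concave in $(t,s)$ and by Lemma 2.2 strictly increasing in $s$. Let $k(t,x)$ be its inverse in the second variable, so $a(t,k(t,x))=x$. Lemma 2.4 gives that $k$ is convex in $(t,x)$, and being the inverse of an increasing function it is increasing in $x$; comparing with the displayed formula, $f_t=k(t,\cdot)$, so
\[
\hat\phi(\Re\tau,z)=k(\Re\tau,\log|z|).
\]
Since $k$ is convex and increasing in its second slot, $\Re\tau$ is pluriharmonic and $\log|z|$ is plurisubharmonic, the right-hand side is plurisubharmonic in $(\tau,z)$: this is the elementary fact that a convex function, increasing in those arguments that are fed genuinely plurisubharmonic functions, composed with plurisubharmonic (and pluriharmonic) functions is plurisubharmonic, checked via the sub-mean-value inequality and Jensen. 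This shows $\hat\phi_t$ is a subgeodesic.

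I expect the crux to be the first step: recognizing that the auxiliary domain should live in $\C^{n+2}$ and that Theorem 2.1 then yields joint concavity of $\log\sigma_t(s)$ in the two real parameters, rather than concavity in each separately. The verification of the hypotheses of Theorem 2.1 for the slices is essentially copied from the proof of Theorem 2.2, and the inversion and composition steps are routine once Lemmas 2.2 and 2.4 are available. The only genuine technical nuisance is the smoothness assumption needed for strict monotonicity of $\sigma_t$; this can be removed by approximation, or by applying Lemma 2.4 to $a(t,s)+\varepsilon s$ and letting $\varepsilon\to0$.
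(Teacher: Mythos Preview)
Your proposal is correct and follows essentially the same route as the paper: form the auxiliary pseudoconvex domain with two extra complex variables, apply Theorem~2.1 (reduced to the one-parameter case by restriction to complex lines in the base) to obtain joint concavity of $\log\sigma_t(s)$ in $(t,s)$, and then invoke Lemma~2.4 (the paper's Lemma~2.5) to invert and conclude joint convexity of $f_t(x)$. The paper's proof is much terser---it simply refers back to the proof of Theorem~2.3 for the setup and to the inversion lemma for the conclusion---whereas you spell out the construction of $\D$, the verification of the slice hypotheses, and the final composition step explicitly; but the underlying argument is the same.
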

\begin{proof}
Let $\phi_{s}$ be a subgeodesic which we may assume to be smooth. 
Let 
$$
A(s,t)=|\{z,\phi_{s}(z)<t\}|.
$$
 It follows again from Theorem 2.1 
that $a:=\log A$ is a concave function of $s$ and $t$ together.
As in the proof of Theorem 2.3  all we need
to prove is that the inverse of $a$ with respect to $t$ (for $s$
fixed), $k(s,x)$) is convex with respect to $s$ and $t$ jointly.
But this is precisely the content of  Lemma 2.4.
\end{proof}
We now first sketch the principle of the argument and fill in some details and change the set up a little bit  afterwords.
Consider the energy functionals along the two curves $\phi_t$ and $\hat\phi_t$,  $\E(\phi_t)=:g(t)$ and $\E(\hat\phi_t)=h(t)$. Since $\phi_0$ is already radial, $g(0)=h(0)$, and we want to prove that $g(1)\geq h(1)$. We know that $g$ is affine and that $h$ is concave, so this follows if we can prove that $g'(0)=h'(0)$. 
But 
$$
g'(0)=\int-\dot\phi_0 (dd^c\phi_0)^n,
$$
since the geodesic is $C^1$.
We also claim  that we can arrange things so that 
$$
h'(0)=\int -\frac{d\hat\phi_t }{dt}|_{t=0}\,(dd^c\phi_0)^n.
$$
By the choice of $\phi_0$,
$$
g'(0)=\int-\dot\phi_0 F(\phi_0)=\frac{d}{dt}|_{t=0}\int -G(\phi_t),
$$
if $G'=F$. Similarily
$$
h'(0)=\frac{d}{dt}|_{t=0}\int -G(\hat\phi_t).
$$
But, since $\phi_t$ and $\hat\phi_t$ are equidistributed
$$
\int -G(\phi_t)=\int -G(\hat\phi_t)
$$
for all $t$. 
Hence $g'(0)=h'(0)$ and the proof is complete. 

\bigskip

It remains to see why we can assume that the geodesic $\phi_t$ is $C^1$ and to motivate the claim about the derivative of $h$. First, since we have assumed that $\phi_0$ and $\phi_1$ are smooth up to the boundary, we can by a max construction assume that they are booth equal to $A \log((1+|z|^2)/2)$ for some large $A>0$, when $|z|>(1-\epsilon)$. Then $\phi_0$ and $\phi_1$ can be extended to psh functions in all of $\C^n$, equal to  $A \log((1+|z|^2)/2)$ outside of the unit ball. In fact, we can even consider them as metrics on a line bundle $\mathcal{O}(A)$ over $\mathbb{P}^n$. It the follows from Chen's theorem, \cite{Chen},  that they can be connected by a $C^1$ geodesic in the space of metrics on $\mathcal{O}(A)$. It is easy to see that this geodesic must in  fact be equal to  $A \log((1+|z|^2)/2)$ for $|z|> 1-\epsilon$ for some positive $\epsilon$. In particular, it vanishes on the boundary of the ball, and $\dot\phi_t$ is identically zero near the boundary. 

To handle the claim about the derivative of $h$ we change the setup a little bit. $\hat\phi_0=\phi_0$ is smooth and we can approximate $\hat\phi_1$ from above by a smooth radial plurisubharmonic  function. Now connect these two smooth functions by a geodesic, $\psi_t$, which can be take to be $C^{(1,1)}$ by the above argument. (As a matter of fact it will even be smooth, since geodesics between radial functions come from geodesics between smooth convex functions, which are smooth). Let
$$
\E(\psi_t)=:k(t).
$$
Since $\psi_t\geq\hat\phi_t$, $-\dot\psi_0\leq -\dot{\hat\phi}_0$. We then apply the above argument to $k$ instead of $h$ and find that $k(1)\leq g(1)$. Taking limits as $\psi_1$ tends to $\hat\phi_1$ we  conclude the proof.

\subsection{Other domains.}

Let us consider a smoothly bounded balanced domain $\Omega$ in $\C^n$ which we can write as
$$
\Omega=\{z; u_\Omega(z)<0\}
$$
where $u_\Omega$ is {\it logarithmically homogenuous} , i e $u_\Omega(\lambda z)=\log |\lambda| +u_\Omega(z)$. Indeed,  $u_\Omega$ is the logarithm of the Minkowski functional for $\Omega$. We first claim that if $\Omega$ is pseudoconvex then $u_\Omega$ is plurisubharmonic.
\begin{lma} Let $u$ be a smooth function such that 
$$
D:=\{(w,z); u(z)-\Re w<0\}
$$
is pseudoconvex. Then $u$ is plurisubharmonic.
\end{lma}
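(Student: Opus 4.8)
First I would set up the obvious correspondence. The hypothesis is that $D = \{(w,z) : u(z) - \Re w < 0\}$ is pseudoconvex in $\C \times \C^n = \C^{n+1}$, and we want plurisubharmonicity of $u$ on its domain of definition. Since plurisubharmonicity is a local and one-complex-variable-at-a-time condition, it suffices to check that for every complex line $\ell \mapsto z_0 + \ell\, v$ in $\C^n$, the function $\ell \mapsto u(z_0 + \ell v)$ is subharmonic in $\ell$. I would then restrict $D$ to the corresponding two-dimensional complex slice $\{(w,\ell) : u(z_0 + \ell v) - \Re w < 0\}$, which is still pseudoconvex (an intersection of $D$ with a linear subspace). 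So after this reduction the whole problem lives in $\C^2$ with coordinates $(w,\ell)$, and we must show: if $\{(w,\ell) : v(\ell) - \Re w < 0\}$ is pseudoconvex (equivalently, its defining function $v(\ell) - \Re w$ is plurisubharmonic, since $u$ is smooth and the boundary is non-characteristic), then $v$ is subharmonic.

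Next, the cleanest route is to use the standard characterization of pseudoconvexity of a smoothly bounded domain by the Levi form being nonnegative on the complex tangent space of the boundary. Write $\rho(w,\ell) = v(\ell) - \Re w = v(\ell) - \tfrac12(w + \bar w)$. I would compute the complex Hessian $(\rho_{j\bar k})$ in the variables $(w,\ell)$: the only nonzero second derivatives are $\rho_{\ell\bar\ell} = v_{\ell\bar\ell}$ (and $\rho_{w\bar w} = \rho_{w\bar\ell} = 0$). A complex tangent vector to $\partial D$ at a boundary point is any $(a, b)$ with $\partial\rho \cdot (a,b) = 0$, i.e. $-\tfrac12 a + v_\ell\, b = 0$. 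Evaluating the Levi form on such a vector gives $\rho_{\ell\bar\ell}|b|^2 = v_{\ell\bar\ell}|b|^2$. Choosing $b \neq 0$ (and $a = 2 v_\ell b$ accordingly), pseudoconvexity forces $v_{\ell\bar\ell} \geq 0$ at every point $\ell$ in the range — since for each such $\ell$ there is indeed a boundary point $(w_0,\ell)$ with $\Re w_0 = v(\ell)$ — hence $v$ is subharmonic, and therefore $u$ is plurisubharmonic.

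The main obstacle, and the only place one must be slightly careful, is the reduction to the Levi-form statement: one needs $D$ to be smoothly bounded near the relevant boundary points and the gradient of $\rho$ to be nonvanishing there, so that "pseudoconvex" is equivalent to "Levi form $\geq 0$ on the complex tangent space." Since $u$ is assumed smooth, $d\rho = -\tfrac12(dw + d\bar w) + d_\ell v$ never vanishes (the $dw$-component alone is nonzero), so the boundary $\partial D$ is a smooth real hypersurface and the equivalence applies; alternatively one avoids the boundary entirely by noting that pseudoconvexity of $D$ is equivalent to plurisubharmonicity of $-\log(\text{dist to }\partial D)$, or most simply that $\rho$ itself, being the difference of a plurisubharmonic candidate and the pluriharmonic $\Re w$, is plurisubharmonic on $D$ iff $v$ is subharmonic — but the Levi-form computation above is the most transparent and is the one I would write out. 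Either way the computation is a two-line Hessian calculation, so there is no real difficulty once the slicing reduction is in place.
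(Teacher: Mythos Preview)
Your argument is correct. Both your proof and the paper's rest on the same core observation --- that the Levi form of $\partial D$ along complex tangential directions is governed by $dd^c u$ --- but the two are organized differently. You slice first to a $\C^2$ picture $(w,\ell)$ and then compute the Levi form directly on an explicit complex tangent vector $(2v_\ell b,\,b)$, reading off $v_{\ell\bar\ell}|b|^2\geq 0$. The paper instead stays in $\C^{n+1}$ and uses a biholomorphic normalization: at a point where $du=0$ the complex tangent space to $\partial D$ is exactly $\{0\}\times\C^n$, so the Levi form there \emph{is} $dd^c u$; for a general point one subtracts a linear form $\Re(a\cdot z)$ from $u$ (leaving $dd^c u$ unchanged) and applies the biholomorphism $(w,z)\mapsto(w+a\cdot z,z)$, which preserves the shape of $D$ and manufactures a critical point at the desired location. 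Your route is slightly more explicit and self-contained; the paper's is shorter once one sees the trick, and avoids the preliminary slicing reduction altogether. Neither approach requires anything beyond smoothness and the standard Levi-form characterization, and your remark that $d\rho$ never vanishes (because of the $dw$-component) is exactly what makes that characterization applicable.
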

\begin{proof}  At a point $z$ where $du=0$ the Levi form of the boundary of $D$ is precisely $dd^c u$ so if $D$ is pseudoconvex $dd^c u\geq 0$ at such points. The general case is reduced to this by subtracting a linear form $\Re a\cdot z$ from $u$ and considering the biholomorphic transformation $(w,z)\mapsto (w+a\cdot z, z)$.
\end{proof}

Since the set $\{u_\Omega(z)-\Re w<0\}=\{u_\Omega(z e^{-w})<0\}$ is pseudoconvex it follows from the lemma that $u_\Omega$ is plurisubharmonic. Let us now consider $S^1$-invariant functions in $\Omega$ of the form $\phi(z)=f(u_\Omega)$ where $f$ is convex. If we normalize so that the volume of $\Omega$ equals the volume of the unit ball it is clear that the Schwarz symmetrization of $\phi$, $\hat\phi$ is $f(\log|z|)$.
\begin{prop}
If $\phi=f(u_\Omega)$ with $f$ convex, the Monge-Ampere energy of $\phi$ equals
$$
\int_\Omega (-\phi)(dd^c\phi)^n=2^{-n}\int_{-\infty}^0 (f')^{n+1}(t)dt.
$$
In particular, the energy of $\phi$ is equal to the energy of $\hat\phi$, the Schwarz symmetrization of $\phi$.
\end{prop}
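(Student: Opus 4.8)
The plan is to evaluate both sides by slicing along the level sets of $u_\Omega$, exploiting that $u_\Omega$ is logarithmically homogeneous. Since $\partial\Omega$ is smooth, $u_\Omega$ is smooth and plurisubharmonic on $\Omega\setminus\{0\}$ (Lemma 2.7), and after a routine smoothing of $f$ we may compute there via the chain rule $dd^c\phi=f'(u_\Omega)\,dd^c u_\Omega+f''(u_\Omega)\,du_\Omega\wedge d^c u_\Omega$. Differentiating the identity $u_\Omega(\lambda z)=\log|\lambda|+u_\Omega(z)$ twice shows that the position vector $(z_1,\dots,z_n)$ always lies in the kernel of the complex Hessian of $u_\Omega$, so that Hessian has rank at most $n-1$ and $(dd^c u_\Omega)^n\equiv 0$ on $\Omega\setminus\{0\}$. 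As $du_\Omega\wedge d^c u_\Omega$ has rank one and therefore squares to zero, expanding the $n$-th power by the binomial formula leaves only one surviving term,
$$(dd^c\phi)^n=n\,f'(u_\Omega)^{n-1}f''(u_\Omega)\,(dd^c u_\Omega)^{n-1}\wedge du_\Omega\wedge d^c u_\Omega.$$

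Next I would disintegrate this measure with respect to $t=u_\Omega$. Writing $\Omega_t=\{u_\Omega<t\}$ and using $du_\Omega\wedge d^c u_\Omega\wedge(dd^c u_\Omega)^{n-1}=d( u_\Omega\,d^c u_\Omega\wedge(dd^c u_\Omega)^{n-1})$ (valid because $(dd^c u_\Omega)^n=0$), Stokes' theorem on the shell $\Omega_t\setminus\Omega_s$, for $-\infty<s<t<0$, gives $\int_{\Omega_t\setminus\Omega_s}(dd^c u_\Omega)^{n-1}\wedge du_\Omega\wedge d^c u_\Omega=(t-s)\,b_n$, where $b_n=\int_{\{u_\Omega=c\}}d^c u_\Omega\wedge(dd^c u_\Omega)^{n-1}$. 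By logarithmic homogeneity each level set is a dilate of $\partial\Omega$, and $d^c u_\Omega$ and $dd^c u_\Omega$ are dilation invariant, so $b_n$ is independent of the level $c$; moreover, as the integral of a closed form over the unit circle bundle of the tautological line bundle over $\mathbb P^{n-1}$, it is independent of $\Omega$ as well. Hence the push-forward of $(dd^c u_\Omega)^{n-1}\wedge du_\Omega\wedge d^c u_\Omega$ under $u_\Omega$ is $b_n\,dt$ on $(-\infty,0)$, and therefore $\int_\Omega(-\phi)(dd^c\phi)^n=b_n\int_{-\infty}^0(-f(t))\,n\,f'(t)^{n-1}f''(t)\,dt$. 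Integrating by parts — the boundary term vanishes at $t=0$ since $f(0)=0$, and at $t=-\infty$ under the standing finite-energy hypothesis — this equals $b_n\int_{-\infty}^0 f'(t)^{n+1}\,dt$. Finally I would pin down $b_n$ by specializing to the unit ball, where $u_\Omega=\log|z|$, obtaining $b_n=2^{-n}$, which is the asserted identity. Since the right-hand side depends only on $f$ and $\hat\phi=f(\log|z|)$ is precisely $\phi$ for the unit ball, it follows that $\E(\phi)=\E(\hat\phi)$.

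The step needing the most care is the behavior at the origin, where $\phi$ may tend to $-\infty$: one must check that $(dd^c\phi)^n$ is a well-defined measure there (Bedford--Taylor if $\phi$ is bounded, otherwise within the appropriate energy class) and that the mass carried by the small shells $\Omega_s$ tends to zero as $s\to-\infty$ — which is exactly the vanishing of the boundary term $f(s)f'(s)^n\to 0$ used in the integration by parts, and holds precisely when the energy is finite. The remaining ingredients — smoothing $f$, justifying Stokes' theorem (equivalently the coarea formula) for these currents, and the dilation invariance of $b_n$ — are routine.
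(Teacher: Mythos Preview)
Your argument is correct and runs parallel to the paper's own proof: both slice along the level sets of $u_\Omega$, use that $(dd^cu_\Omega)^n=0$ off the origin, reduce everything to the boundary integral $b_n=\int_{\{u_\Omega=c\}}d^cu_\Omega\wedge(dd^cu_\Omega)^{n-1}$, and finish with an integration by parts in the single real variable $t$. The only organisational difference is that the paper first computes the distribution function $\sigma(s)=\int_{\{u_\Omega<s\}}(dd^c\phi)^n=2^{-n}f'(s)^n$ (their Lemma~2.9) and then writes $\E=\int(-f)\,d\sigma=\int\sigma f'\,ds$, whereas you expand $(dd^c\phi)^n$ pointwise and push forward by $u_\Omega$ before integrating by parts; differentiating their $\sigma$ recovers exactly your density $b_n\, n(f')^{n-1}f''$, so the two computations are the same in disguise.

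The one place where the paper is a shade more direct is the evaluation $b_n=2^{-n}$. You argue that $b_n$ is independent of the level by dilation invariance (fine) and independent of $\Omega$ by a cohomological remark about circle bundles over $\mathbb{P}^{n-1}$, then specialise to the ball. The paper instead applies Stokes once more to write $b_n=\int_{\{u_\Omega<s\}}(dd^cu_\Omega)^n$, observes that this is a point mass at the origin since $u_\Omega$ is maximal elsewhere, and identifies the mass as $2^{-n}$ because $u_\Omega-\log|z|$ is bounded near $0$ (so the Lelong/Monge--Amp\`ere mass agrees with that of $\log|z|$). This bypasses the need to argue separately that $b_n$ is domain-independent, and would tighten your write-up.
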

In the proof we use  the next lemma.
\begin{lma}If $\phi=f(u_\Omega)$ with $f$ convex
$$
\int_{u_\Omega<s} (dd^c\phi)^{n}= 2^{-n}f'(s)^n.
$$
\end{lma}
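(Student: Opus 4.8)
The plan is to reduce the identity to a one-variable statement by exploiting that $u_\Omega$ is maximal --- that is, it solves the homogeneous Monge-Ampere equation --- away from the origin. First I would regularize so that $f$, and hence $\phi=f(u_\Omega)$, is smooth; since $\Omega$ is smoothly bounded this makes $u_\Omega$ and $\phi$ smooth on $\Omega\setminus\{0\}$, and after a further approximation of $f$ by bounded convex increasing functions we may also assume $\phi$ bounded, the general case following by a limiting argument. From $d^c\phi=f'(u_\Omega)\,d^cu_\Omega$ one has
$$
dd^c\phi=f'(u_\Omega)\,dd^cu_\Omega+f''(u_\Omega)\,du_\Omega\wedge d^cu_\Omega,
$$
and since $du_\Omega\wedge d^cu_\Omega\wedge du_\Omega\wedge d^cu_\Omega=0$, the binomial expansion of $(dd^c\phi)^n$ collapses to two terms,
$$
(dd^c\phi)^n=f'(u_\Omega)^n\,(dd^cu_\Omega)^n+n\,f'(u_\Omega)^{n-1}f''(u_\Omega)\,(dd^cu_\Omega)^{n-1}\wedge du_\Omega\wedge d^cu_\Omega.
$$

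The decisive point is that $(dd^cu_\Omega)^n=0$ on $\C^n\setminus\{0\}$. Indeed, since $u_\Omega$ is logarithmically homogeneous and $S^1$-invariant, $u_\Omega-\log|z|$ is the pull-back under the projection $\pi\colon\C^n\setminus\{0\}\to\mathbb{P}^{n-1}$ of a function on $\mathbb{P}^{n-1}$, so $dd^cu_\Omega$ is the pull-back of a $(1,1)$-form on $\mathbb{P}^{n-1}$ and its $n$-th power vanishes for dimension reasons. Hence on $\C^n\setminus\{0\}$ the surviving term above is exact,
$$
(dd^c\phi)^n=d\eta,\qquad \eta:=f'(u_\Omega)^n\,d^cu_\Omega\wedge(dd^cu_\Omega)^{n-1},
$$
as one checks by differentiating $\eta$ and using $d(d^cu_\Omega\wedge(dd^cu_\Omega)^{n-1})=(dd^cu_\Omega)^n=0$. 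Applying Stokes' theorem on a shell $\{s'<u_\Omega<s\}$, on whose two boundary components $f'(u_\Omega)$ is the constant $f'(s)$, respectively $f'(s')$, then gives
$$
\int_{\{s'<u_\Omega<s\}}(dd^c\phi)^n=\kappa\,(f'(s)^n-f'(s')^n),\qquad \kappa:=\int_{\{u_\Omega=\tau\}}d^cu_\Omega\wedge(dd^cu_\Omega)^{n-1},
$$
where $\kappa$ does not depend on $\tau$ for the same reason. Letting $s'\to-\infty$ and using that $(dd^c\phi)^n$, the Monge-Ampere measure of a bounded plurisubharmonic function, puts no mass at the origin while $f'(s')^n\to0$, one obtains $\int_{\{u_\Omega<s\}}(dd^c\phi)^n=\kappa\,f'(s)^n$.

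It remains to evaluate $\kappa$. The dilation $z\mapsto e^sz$ adds $s$ to $u_\Omega$, so $\int_{e^s\Omega}(dd^cu_\Omega)^n$ is independent of $s$; combined with $(dd^cu_\Omega)^n=0$ off the origin this identifies $\kappa$ with the Monge-Ampere mass of $u_\Omega$ concentrated at $0$. Since $u_\Omega=\log|z|+O(1)$ and $u_\Omega-\log|z|$ descends to $\mathbb{P}^{n-1}$, this mass equals that of $\log|z|$, namely $2^{-n}$ in the normalization of $dd^c$ used here (equivalently, $(dd^c\log|z|^2)^n=\delta_0$). I expect the only points requiring genuine care to be the passage of Stokes' theorem across the singularity of $u_\Omega$ and of $\eta$ at the origin, together with the accompanying claims that neither $(dd^c\phi)^n$ nor the boundary term leaves a residue there, and, as a minor secondary matter, the normalization bookkeeping that produces the constant $2^{-n}$; the algebraic core, the collapse of $(dd^c\phi)^n$ to a single exact term, is entirely elementary.
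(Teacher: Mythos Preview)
Your argument is correct and is essentially the same as the paper's: both expand $dd^c\phi$ in terms of $u_\Omega$, use Stokes to reduce to the boundary integral $\int_{\{u_\Omega=s\}} d^cu_\Omega\wedge(dd^cu_\Omega)^{n-1}$, invoke $(dd^cu_\Omega)^n=0$ off the origin, and identify the remaining constant via the comparison $u_\Omega=\log|z|+O(1)$. The only cosmetic difference is that the paper applies Stokes directly on $\{u_\Omega<s\}$ and reads off the Dirac mass of $(dd^cu_\Omega)^n$ at the origin, whereas you work on a shell and pass to the limit $s'\to-\infty$; your version is slightly more careful about the singularity at $0$, but the underlying computation is identical.
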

\begin{proof}
$$
\int_{u_\Omega<s} (dd^c\phi)^{n}=\int_{u_\Omega=s}d^c\phi\wedge(dd^c\phi)^{n-1}=
f'(s)^n\int_{u_\Omega=s}d^cu_\Omega\wedge(dd^c u_\Omega)^{n-1}.
$$
But
$$
\int_{u_\Omega=s}d^cu_\Omega\wedge(dd^c u_\Omega)^{n-1}=\int_{u_\Omega<s}(dd^c u_\Omega)^n.
$$
Since $u_\Omega$ is log homogenous it satsifies the homogenous Monge-Ampere equation outside of the origin, so $(dd^c u_\Omega)^n$ is a Dirac mass at the origin. But $u_\Omega-\log|z|$ is bounded near the origin so this point mass must be same as 
$$
(dd^c\log|z|)^n= 2^{-n}.
$$
\end{proof}

\noindent {\it Proof of Proposition 2.8} First assume that $f(s)$ is constant for $s$ sufficiently large negative. Let
$$
\sigma(s):= \int_{u_\Omega<s} (dd^c\phi)^n.
$$
Then
$$
\E(\phi)=\int^0 -f(s)d\sigma(s)=\int^0 \sigma(s)f'(s)ds,
$$
so the formula for the energy folows from the previous lemma. 
The general case, when $f$ is not constant near $-\infty$  follows from approximation.

The last statement, that $\E(\phi)=\E(\hat\phi)$ then follows if $|\Omega|$ equals the volume of the unit ball, since then $\hat\phi=f(\log|z|)$. But then the same thing must hold in general since the energy is invariant under scalings.
\qed

\bigskip

Let us now define the '$\Omega$-symmetrization', $S_\Omega(\phi)$,  of a plurisubharmonic function $\phi$ in $\Omega$, vanishing on the boundary, $\phi$, as the unique function of the form $f(u_\Omega)$ that is equidistributed with $\phi$. Notice that if  the $\Omega$-symmetrization of $\phi$ equals $f(u_\Omega)$, then the Schwarz symmetrization is given by $\hat\phi= f(\log(|z|/R)$, if $R$ is chosen so that the volume of $\Omega$ equals the volume of the ball of radius $R$. The last part of proposition 2.8 then says that
$$
\E_\Omega(S_\Omega(\phi))=\E_B(\hat\phi),
$$
where we have put subscripts on $\E$ to emphasize over which domain we compute the energy, and $B$ denotes a ball of the same volume as $\Omega$. 
Notice also that it follows from Theorem 2.3 that $S_\Omega(\phi)$ is plurisubharmonic if $\phi$ is plurisubharmonic. Indeed, Theorem 2.3 says that $\hat\phi=f(\log|z|)$ is plurisubharmonic, i e that $f$ is convex and increasing, from which it follows that $f(u_\Omega)$ is plurisubharmonic.  
We therefore  see that to prove that the Schwarz symmetrization of
a function $\phi$ on $\Omega$ has smaller Monge-Ampere energy than $\phi$ is equivalent to proving that the $\Omega$-symmetrization of $\phi$ has smaller energy than $\phi$.

One might try to prove this by following the same method as in the proof of Theorem 2.4. The point where the proof breaks down is that we need to choose a reference function on $\Omega$ that satisfies an equation of the form
\be
(dd^c\phi_0)^n=F(\phi_0)
\ee
where $\phi_0$ is of the form $\phi_0=f(u_\Omega)$. This is easy if $\Omega$ is the ball so that $u_\Omega=\log|z|$ since $(dd^c\phi_0)^n$ then is invariant under the unitary group if $\phi_0$ is, and hence must be radial. Nothing of the sort holds for other domains. Since, outside the origin, 
$$
(dd^c\phi_0)^n=f''(u_\Omega)f'(u_\Omega)^{n-1}du_\Omega\wedge d^cu_\Omega\wedge (dd^c u_\Omega)^{n-1},
$$
what we  want is that the determinant of the Leviform
\be
du_\Omega\wedge d^cu_\Omega\wedge (dd^c u_\Omega)^{n-1}
\ee
be constant on all level surfaces of $u_\Omega$.
This is clearly true if $\Omega$ is a ball, and therefore also true if $\Omega$ is the image of a ball under a complex linear transformation. We shall next see that these are the only cases when this holds.
\begin{prop} Let $\Omega$ be a balanced domain in $\mathbb C^n$ and let $u_\Omega$ be the uniquely determined logarithmically homogenous (plurisubharmonic)  function that vanishes on the boundary of $\Omega$. Assume $u_\Omega$ satisfies the condition that  (2.3) be constant  on some, and therefore every , level surface of $u_\Omega$. Then $\Omega$ is an ellipsoid
$$
\Omega=\{z; \sum a_{j k} z_j\bar z_k <1\}
$$
for some positively definite matrix $A=(a_{j k})$.
\end{prop}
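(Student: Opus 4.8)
The plan is to rewrite the hypothesis on $u_\Omega$ as a K\"ahler--Einstein equation on projective space and then apply the Bando--Mabuchi uniqueness theorem. Since $\Omega$ is balanced and pseudoconvex, $u_\Omega$ is plurisubharmonic and logarithmically homogeneous on $\C^n\setminus\{0\}$, so $\|v\|^2:=e^{2u_\Omega(v)}$ is a Hermitian metric on the tautological line bundle $\mathcal{O}(-1)$ over $\mathbb{P}^{n-1}=\mathbb{P}(\C^n)$, whose total space is (off the zero section) $\C^n\setminus\{0\}$ itself. Dualizing gives a positively curved metric on $\mathcal{O}(1)$; write $\varphi$ for its weight, so that $\omega:=dd^c\varphi$ is a closed positive $(1,1)$-form representing $c_1(\mathcal{O}(1))$. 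In the affine chart $z_0\neq0$, with $w_j=z_j/z_0$, one has $u_\Omega(z)=\log|z_0|+\tfrac12\varphi(w)$, and therefore $dd^cu_\Omega=\tfrac12\pi^*\omega$ globally on $\C^n\setminus\{0\}$, where $\pi$ is the projection to $\mathbb{P}^{n-1}$.

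Next I would express (2.3) in these terms. Because $\pi^*\omega^{\,n-1}$ is the pull-back of a form of top degree from $\mathbb{P}^{n-1}$, only the purely vertical part $d\log|z_0|\wedge d^c\log|z_0|$ of $du_\Omega\wedge d^cu_\Omega$ survives in the wedge product, so
\[
du_\Omega\wedge d^cu_\Omega\wedge(dd^cu_\Omega)^{n-1}=2^{-(n-1)}\,d\log|z_0|\wedge d^c\log|z_0|\wedge\pi^*\omega^{\,n-1}.
\]
Written out in the chart, using $|z_0|^2=e^{2u_\Omega-\varphi(w)}$ and $dV_{\C^n}=|z_0|^{2(n-1)}\,dV(z_0)\,dV(w)$, the density of the left-hand side with respect to Lebesgue measure equals $e^{-2nu_\Omega}$ times a constant multiple of $\det(\varphi_{j\bar k})\,e^{n\varphi}$, all the $u_\Omega$-dependence being carried by the first factor (as it must be, by log-homogeneity). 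Thus ``(2.3) is constant on every level surface of $u_\Omega$'' is equivalent to $\det(\varphi_{j\bar k})\,e^{n\varphi}$ being constant on $\mathbb{P}^{n-1}$, i.e.\ to $(dd^c\varphi)^{n-1}=c\,e^{-n\varphi}\,dV(w)$ in each chart, with $c>0$ since (2.3) is a non-zero positive measure (in particular $\omega$ is then a genuine K\"ahler metric). Invariantly, this says that the volume form $\omega^{\,n-1}$ is a constant multiple of the volume form attached to the induced metric $n\varphi$ on $-K_{\mathbb{P}^{n-1}}=\mathcal{O}(n)$; applying $-dd^c\log(\cdot)$ to both sides gives $\mathrm{Ric}\,\omega=n\,\omega$, which is exactly the K\"ahler--Einstein equation on $\mathbb{P}^{n-1}$ in the normalization satisfied by the Fubini--Study metric.

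Finally I would quote the Bando--Mabuchi theorem: a K\"ahler--Einstein metric on $\mathbb{P}^{n-1}$ is unique up to automorphism, so $\omega=T^{*}\omega_{FS}$ for some $T\in\mathrm{Aut}(\mathbb{P}^{n-1})=PGL(n,\C)$. Hence the metric $\|\cdot\|^2$ on $\mathcal{O}(-1)$ is, up to a positive constant, the pull-back under $T$ of the Fubini--Study metric; choosing a lift $\widetilde T\in GL(n,\C)$ of $T$ and using that the Fubini--Study metric on $\mathcal{O}(-1)$ is $v\mapsto\langle v,v\rangle$, this becomes $e^{2u_\Omega(z)}=\mathrm{const}\cdot\langle\widetilde Tz,\widetilde Tz\rangle=\langle Az,z\rangle$ with $A=\mathrm{const}\cdot\widetilde T^{\dagger}\widetilde T$ positive definite. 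Therefore $\Omega=\{u_\Omega<0\}=\{\langle Az,z\rangle<1\}$ after absorbing the constant into $A$, the asserted ellipsoid.

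The real work is the middle step: the bookkeeping that turns the threefold wedge product (2.3) into the equation $\mathrm{Ric}\,\omega=n\,\omega$. One must carry out the vertical/horizontal splitting correctly, keep track of the universal constants, and --- most importantly --- check that the weight $e^{-2nu_\Omega}$ coming from log-homogeneity cancels exactly, so that the surviving equation is literally the K\"ahler--Einstein equation with the Fubini--Study normalization, rather than a rescaled version to which Bando--Mabuchi would not directly apply. Once the problem is in that form, the uniqueness input and the passage back to $\C^n$ are essentially formal. A minor point is the regularity of $u_\Omega$: one may either assume $\Omega$ smoothly bounded, as elsewhere in this subsection, or observe that constancy of (2.3) is an elliptic Monge--Amp\`ere equation and hence bootstraps to smoothness.
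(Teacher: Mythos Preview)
Your proof is correct and follows essentially the same route as the paper's: interpret $u_\Omega$ as a metric on $\mathcal{O}(-1)$ over $\mathbb{P}^{n-1}$, compute in an affine chart that constancy of (2.3) on level sets is equivalent to the K\"ahler--Einstein equation $(dd^c\varphi)^{n-1}=c\,e^{-n\varphi}\,dV$ on $\mathbb{P}^{n-1}$, and conclude via Bando--Mabuchi. The bookkeeping you flag as the crux is carried out in the paper in exactly the same way, with the same cancellation of the $e^{-2nu_\Omega}$ factor coming from log-homogeneity.
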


\begin{proof}
We will use the relation between logarithmically homogenous functions on $\mathbb C^n$ and metrics on the tautological line bundle $\mathcal{O}(-1)$ on $\mathbb P^{n-1}$. Recall that $\mathbb P^{n-1}$ is the quotient of $\mathbb C^n\setminus\{0\}$ under the  equivalence relation $z\sim \lambda z$ if $\lambda$ is a nonzero complex number. Let $p(z)=[z]$ be the projection map from $\mathbb C^n\setminus\{0\}$ to $\mathbb P^{n-1}$; $[z]$ being the representation of a point in homogenous coordinates. Then $\mathbb C^n\setminus\{0\}$ can be interpreted as the total space of $\mathcal{O}(-1)$, minus its zero section. A logarithically homogenous function like $u_\Omega$ can then be written $u_\Omega=\log |z|_h$ for some metric $h$ on $\mathcal{O}(-1)$. In an affine chart $[z]=[(1,\zeta)]$ on $\mathbb P^{n-1}$ with associated trivialization of $\mathcal{O}(-1)$ ; $z=(\lambda, \lambda \zeta)$,
$$
|z|^2_h=|\lambda|^2 e^{\psi(\zeta)}
$$
where $-\psi$ is a local representative for the metric $h$ on $\mathcal{O}(-1)$. Hence
\be
u_\Omega=\log |\lambda| +(1/2)\psi(\zeta).
\ee
\bigskip

Let us now look at the form (2.3). If it is constant on level surfaces it must be equal to $ F(u) dz\wedge d\bar z$ for some function $F(u)$. By log-homogenuity, the form is moreover homogenous of degree $-2n$, so we must have
$$
F(u)= Ce^{-2n u}. 
$$
Changing coordinates to $(\lambda, \lambda\zeta)$ we get
$$
du_\Omega\wedge d^cu_\Omega\wedge (dd^c u_\Omega)^{n-1}= C e^{-2n u} |\lambda|^{2n-2} d\lambda\wedge d\bar\lambda\wedge d\zeta\wedge d\bar \zeta.
$$
On the other hand, by (2.4),
$$
du_\Omega\wedge d^cu_\Omega\wedge (dd^c u_\Omega)^{n-1}=C' |\lambda|^{-2} d\lambda\wedge d\bar\lambda\wedge (dd^c\psi)_{n-1}.
$$
Hence
$$
(dd^c\psi)_{n-1}= C'' e^{-n\psi} d\zeta\wedge d\bar \zeta.
$$
This means precisely that the metric $n\psi$ on the anticanonical line bundle $\mathcal{O}(n)$ on $\mathbb P^{n-1}$ solves the K\"ahler-Einstein equation . But all such metrics can be written (on the total space)
$$
\log |\chi|_h^2= n\log |A z|^2
$$
where $z$ are the standard coordinates on $\mathbb C^n$ and $A$ is a positively definite matrix. (This follows \newline  e g from the Bando-Mabuchi uniqueness theorem, which says that any K\"ahler-Einstein metric can be obtained from the standard metric $\log |z|^2$ via a holomorphic automorphism.) Hence
$$
u_\Omega=\log |Az|,
$$
so $\Omega$ is an ellipsoid.

\end{proof}

We shall now finally show that the relevance of the form (2.3)  is not just an artefact of the proof. Indeed, we shall show that if the symmetrization inequality
$$
\E_B(\hat\phi)\leq\E_\Omega(\phi)
$$
holds for all $S^1$-invariant plurisubharmonic functions $\phi$ in $\Omega$ that vanish on the boundary, then $\Omega$ must satisfy the hypothesis of proposition 2.10, and therefore be an ellipsoid.

 Let $\psi_0=f_0(\log|z|)$ be a function in the ball that solves a Kahler-Einstein type equation
$$
(dd^c\psi)^n= ce^{-\psi} i^{n^2}dz\wedge d\bar z.
$$
Then $\psi_0$  is  a critical point  for a functional of the type
$$
\F_B(\phi):=\log \int_B e^{-\phi}-c'\E_B(\phi) ,
$$
see \cite{Ber-Ber} for more on this. 
From this it follows that
\be
\F_B(\psi')\leq\F_B(\psi_0)
\ee
for all $S^1$-invariant plurisubharmonic functions in the ball that vanish on the boundary. This is explained in  \cite{Ber-Ber}, so we just indicate the argument here. The point is that the functional $\F_B$ is concave along geodesics in the space of $S^1$-invariant plurisubharmonic functions that vanish on the boundary. This follows from two facts. First, the energy term is affine along geodesics, and second, the function
$$
\log \int_B e^{-\phi_t}
$$
is concave under (sub)geodesics. The latter fact follows again from the main result in \cite{Berg} on plurisubharmonic variation of Bergman kernels, since
$$
( \int_B e^{-\phi_t})^{-1}
$$
is the Bergman kernel at the origin for the weight $\phi_t$ if $\phi_t$ is $S^1$-invariant. Given the concavity of $\F_B$ it then follows that a critical point is a maximum, i e (2.5) holds.

\bigskip

Let us now consider the analogous functional defined on functions on $\Omega$,
$$
 \F_\Omega(\phi):=\log \int_\Omega e^{-\phi}-c'\E_\Omega(\phi) .
$$
Assume, to get a contradiction that $\E_\Omega(S_\Omega(\phi)\leq \E_\Omega(\phi)$. Then $\F_\Omega$ increases under $\Omega$-symmetrization. Moreover, by proposition 2.8, $\F_\Omega(S_\Omega(\phi))=\F_B(\hat\phi)$ so 
$$
 \F_\Omega(\phi)\leq \F_B(\hat\phi)\leq \F_B(\psi_0),
$$
where $\psi_0$ is the K\"ahler-Einstein potential discussed above. Hence the maximum of the left hand side over all $\phi$ is attained for 
$$
\phi=f_0(u_\Omega).
$$

But then it is easy to see that $\phi$ solves  the same Kahler-Einstein equation as $\psi_0$. Indeed, at least if $\Omega$ is strictly pseudoconvex, $\phi$ is strictly plurisubharmonic ouside the origin. Therefore small perturbations of $\phi$ are still plurisubharmonic and  the variational equation for $\F_\Omega$ is just the K\"ahler-Einstein equation.  In particular, $\phi$ solves an equation of type (2.2) in $\Omega$, which we have seen is possible only if $\Omega$ is an ellipsoid. 
We summarize the discussion in the next theorem.
\begin{thm}
Let $\Omega$ be a strictly pseudoconvex balanced domain for which the symmetrization inequality
$$
\E_B(\hat\phi)\leq\E_\Omega(\phi)
$$
holds for all $S^1$-invariant plurisubharmonic $\phi$ that vanish on the boundary. Then $\Omega$ is an ellipsoid.
\end{thm}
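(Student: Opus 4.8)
The plan is to run the variational argument outlined in the paragraphs preceding the statement, making the two delicate points explicit. First I would introduce the functionals $\F_\Omega(\phi)=\log\int_\Omega e^{-\phi}-c'\E_\Omega(\phi)$ and its ball analogue $\F_B$, where $c'>0$ is the constant matching the K\"ahler--Einstein normalization, and recall from \cite{Ber-Ber} that $\F_B$ is concave along bounded geodesics of $S^1$-invariant plurisubharmonic functions vanishing on $\partial B$: the energy term is affine along geodesics, and $\phi_t\mapsto\log\int_B e^{-\phi_t}$ is concave by the plurisubharmonic variation of Bergman kernels of \cite{Berg}. Hence the radial K\"ahler--Einstein potential $\psi_0=f_0(\log|z|)$ solving $(dd^c\psi)^n=ce^{-\psi}i^{n^2}dz\wedge d\bar z$ is a global maximizer of $\F_B$ over this class. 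Note that $f_0$ is convex and increasing with $f_0(0)=0$ (apply Theorem 2.3 to the radial psh function $\psi_0$), so $\phi_0:=f_0(u_\Omega)$ is a legitimate $S^1$-invariant psh competitor on $\Omega$ vanishing on $\partial\Omega$; after normalizing $|\Omega|=|B|$ — harmless since both hypothesis and conclusion are scale invariant — one has $S_\Omega(\phi_0)=\phi_0$ and $\hat{\phi_0}=\psi_0$.

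Second, I would invoke the hypothesis. Since $\log\int e^{-\phi}$ depends only on the distribution of $\phi$, and since by the last part of Proposition 2.8 the assumed inequality $\E_B(\hat\phi)\leq\E_\Omega(\phi)$ is the same as $\E_\Omega(S_\Omega(\phi))\leq\E_\Omega(\phi)$, the functional $\F_\Omega$ does not decrease under $\Omega$-symmetrization. Combined with the identity $\F_\Omega(S_\Omega(\phi))=\F_B(\hat\phi)$ (Proposition 2.8 together with equidistribution of the exponential integral) this gives
$$
\F_\Omega(\phi)\leq\F_\Omega(S_\Omega(\phi))=\F_B(\hat\phi)\leq\F_B(\psi_0)=\F_\Omega(\phi_0)
$$
for every admissible $\phi$. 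Thus $\phi_0=f_0(u_\Omega)$ maximizes $\F_\Omega$ over the class of $S^1$-invariant psh functions on $\Omega$ vanishing on the boundary.

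Third, I would read off the Euler--Lagrange equation and conclude. Because $\Omega$ is strictly pseudoconvex, $u_\Omega$ is strictly plurisubharmonic on $\Omega\setminus\{0\}$, hence so is $\phi_0$; therefore sufficiently small smooth perturbations of $\phi_0$ supported in a compact subset of $\Omega\setminus\{0\}$ remain plurisubharmonic, and the first variation of $\F_\Omega$ at $\phi_0$ must vanish in all such directions. That variational equation is precisely $(dd^c\phi_0)^n=ce^{-\phi_0}i^{n^2}dz\wedge d\bar z$ on $\Omega\setminus\{0\}$, i.e. an equation of type (2.2) with $\phi_0=f_0(u_\Omega)$. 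Substituting $(dd^c\phi_0)^n=f_0''(u_\Omega)f_0'(u_\Omega)^{n-1}\,du_\Omega\wedge d^cu_\Omega\wedge(dd^c u_\Omega)^{n-1}$ and comparing with $ce^{-f_0(u_\Omega)}$, which is a function of $u_\Omega$ alone, forces the form (2.3) to be constant on every level surface of $u_\Omega$. Proposition 2.10 then concludes that $\Omega$ is an ellipsoid.

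The main obstacle I expect is the third step: justifying rigorously that being a maximizer of $\F_\Omega$ forces $\phi_0$ to solve the K\"ahler--Einstein PDE. One must control the two places where the argument is fragile — the origin, where $u_\Omega$ has a logarithmic pole and $(dd^c u_\Omega)^n$ carries a Dirac mass, and the boundary, where $\phi_0$ vanishes — and show that the admissible variations are rich enough to pin down the equation on the open set where $\phi_0$ is smooth and strictly psh, while the singular and boundary contributions do not enter the first-variation identity. Everything else is an assembly of facts already in hand: Propositions 2.8 and 2.10, and the concavity and maximality statements imported from \cite{Ber-Ber} and \cite{Berg}.
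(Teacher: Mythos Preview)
Your proposal is correct and follows the paper's own argument essentially step for step: introduce $\F_\Omega$ and $\F_B$, use concavity along geodesics to identify the radial K\"ahler--Einstein potential $\psi_0$ as the maximizer of $\F_B$, chain the hypothesis with Proposition~2.8 to obtain $\F_\Omega(\phi)\leq\F_B(\hat\phi)\leq\F_B(\psi_0)=\F_\Omega(f_0(u_\Omega))$, read off the Euler--Lagrange equation using strict pseudoconvexity, and invoke Proposition~2.10. The only differences are cosmetic---you spell out the normalization $|\Omega|=|B|$ and the chain of inequalities more carefully than the paper does---and the obstacle you flag (justifying the variational step near the origin and the boundary) is exactly the point the paper also leaves at a sketch level.
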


\section{A sharp Moser-Trudinger inequality for $S^1$-invariant functions.}
Our results in the previous section, together with Moser's inequality imply rather easily the next estimate.

\begin{thm} Let $\phi$ be a smooth $S^1$-invariant plurisubharmonic function in the unit ball
that vanishes on the boundary. Let 
$$
\E:=\E(\phi).
$$
Then
$$
\int_\B e^{n\E^{-{1/n}} (-\phi)^{n+1)/n}}\leq C
$$
where $C$ is an absolute constant.
\end{thm}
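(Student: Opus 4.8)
The plan is to use the results of Section 2 to reduce the statement to the radial case, rewrite it as a one-variable integral, and then quote Moser's sharp one-dimensional inequality.

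\emph{Step 1: reduction to radial functions.} Put $\E=\E(\phi)$ (we may assume $\E>0$, the case $\E=0$ being trivial) and let $\hat\phi$ be the Schwarz symmetrization of $\phi$. By Theorem 2.3, $\hat\phi$ is plurisubharmonic in the ball $B$ and vanishes on the boundary, and by Theorem 2.4 its energy satisfies $\E(\hat\phi)\le\E$. Since $\phi$ and $\hat\phi$ are equidistributed,
$$\int_B e^{n\E^{-1/n}(-\phi)^{(n+1)/n}}=\int_B e^{n\E^{-1/n}(-\hat\phi)^{(n+1)/n}}.$$
As $t\mapsto t^{-1/n}$ is decreasing on $(0,\infty)$ while $(-\hat\phi)^{(n+1)/n}\ge 0$, replacing $\E$ by the smaller number $\E(\hat\phi)$ only enlarges the right-hand side. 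Hence it suffices to bound
$$\int_B e^{n\E(\psi)^{-1/n}(-\psi)^{(n+1)/n}}$$
by a constant depending only on $n$, for an arbitrary radial plurisubharmonic $\psi$ on $B$ with zero boundary values and finite positive energy.

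\emph{Step 2: passage to one variable.} Write $\psi=f(\log|z|)$ with $f$ convex, increasing and $f(0)=0$. Integrating over the spheres $|z|=e^{s}$ and then substituting $\tau=-2ns$, $v(\tau)=-f(-\tau/(2n))$ (so that $v(0)=0$, $v\ge 0$ and $v$ is increasing), one obtains
$$\int_B e^{n\E(\psi)^{-1/n}(-\psi)^{(n+1)/n}}=|B|\int_0^\infty e^{n\E(\psi)^{-1/n}v(\tau)^{(n+1)/n}-\tau}\,d\tau,$$
where $|B|$ is the volume of the unit ball. On the other hand, Lemma 2.9 and Proposition 2.8 (applied with $u_\Omega=\log|z|$) identify the pluricomplex energy of the radial function $\psi$ with a fixed multiple of $\int_{-\infty}^{0}f'(s)^{n+1}\,ds$, and the same change of variables turns this into a fixed multiple of $\int_0^\infty|v'(\tau)|^{n+1}\,d\tau$. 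Keeping track of the constants, this multiple is exactly the one for which, after rescaling $v$ so that $\int_0^\infty|v'|^{n+1}=1$, the coefficient of $v^{(n+1)/n}$ in the exponent above equals precisely $1$.

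\emph{Step 3: Moser's lemma and conclusion.} I would then invoke Moser's sharp one-variable inequality \cite{Moser} with exponent $q=n+1$ and conjugate exponent $q'=(n+1)/n$: if $w$ is absolutely continuous on $[0,\infty)$ with $w(0)=0$ and $\int_0^\infty|w'|^{n+1}\le 1$, then $\int_0^\infty e^{w^{(n+1)/n}-\tau}\,d\tau\le C(n)$. Applying this to the rescaled $v$ from Step 2 and multiplying by $|B|$ gives the theorem; if $\hat\phi$ is not smooth, or $f$ is not eventually constant near $-\infty$, one first proves the estimate for $f$ truncated near $-\infty$ and lets the truncation tend to $-\infty$ by monotone convergence. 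The one point that really needs care is the last sentence of Step 2: the coefficient of $(-\psi)^{(n+1)/n}$ in the exponent must come out to be exactly $n\,\E^{-1/n}$ rather than merely a fixed $n$-dependent multiple of it, and this is precisely where the chosen normalization of the Monge-Ampere energy is used; it is also the reason the complex $n$-dimensional problem is governed by the same one-variable inequality as Moser's $(n+1)$-dimensional real one. I expect this bookkeeping, not any conceptual difficulty, to be the main obstacle.
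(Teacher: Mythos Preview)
Your proposal is correct and follows essentially the same route as the paper: reduce to the radial case via Theorems 2.3 and 2.4, rewrite the integral and the energy as one-variable quantities in the variable $s=\log|z|$, and then invoke Moser's sharp inequality with exponent $n+1$. Your Step 1 even makes explicit the monotonicity argument (that $\E(\hat\phi)\le\E$ only increases the integrand) which the paper leaves implicit; the paper in turn is more explicit than you about the bookkeeping in Step 2, recording the two identities $\int_B F\circ\phi=a_n\int_{-\infty}^0 F\circ f\,e^{2nt}\,dt$ and $\E=2^{-n}\int_{-\infty}^0(f')^{n+1}\,dt$ as a lemma and then handling the constants by the scaling $w_\kappa(s)=\kappa^{n/(n+1)}w(s/\kappa)$ with $\kappa=2n$, which is exactly your substitution $\tau=-2ns$ in disguise.
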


In the proof we may by our main result on symmetrization assume that $\phi(z)=
f(\log|z|)$ is a radial function. The main result of Moser, \cite{Moser}, is that if
$w$ is an increasing function on $(-\infty, 0)$ that vanishes when $t$ goes to zero and
satisfies
$$
\int_{-\infty}^0 (-w')^{n+1} dt\leq 1
$$
then 
$$
\int_{-\infty}^0 e^{(-w)^{(n+1)/n}} e^{t} dt \leq C,
$$
where $C$ is an absolute constant. Applying this to $w_\kappa(s):= 
\kappa^{n/(n+1)}w(s/\kappa)$ we obtain that
$$
\int_{-\infty}^0 e^{\kappa (-w)^{(n+1)/n}} e^{\kappa t} dt \leq C/\kappa,
$$
under the same hypothesis. Next we have the following lemma.
\begin{lma} Let $f$ be an increasing convex function on $(-\infty, 0]$ with $f(0)=0$, and let $\phi(z)=f(\log |z|)$. Let  $F$  be a nonnegative measurable  function of one real variable. Then

\noindent
(a) 
$$
\int_\B F\circ \phi =a_n\int_{-\infty}^0 F\circ f e^{2n t} dt
$$
(with $a_n$ being the area of the unit sphere in $\C^n$).

\bigskip

 \noindent and

\noindent (b)
$$
\E = 2^{-n}\int_{-\infty}^0 (f')^{n+1} dt.
$$
\end{lma}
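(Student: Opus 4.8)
The plan is to prove (a) by a direct change of variables and (b) by specialising Proposition 2.8 to the unit ball; the two parts are independent.

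For (a) I would pass to spherical coordinates on $B\subset\C^n=\R^{2n}$. Since $\phi(z)=f(\log|z|)$ depends only on $r=|z|$ and the Lebesgue volume element is $dV=r^{2n-1}\,dr\,d\omega$ with $\omega$ running over the unit sphere $S^{2n-1}$ of total mass $a_n$, one gets $\int_B F\circ\phi\,dV=a_n\int_0^1 F(f(\log r))\,r^{2n-1}\,dr$. Here the value of $\phi$ at the origin, namely $f(-\infty)$, is irrelevant since the origin is Lebesgue-null, and because $F\geq 0$ no integrability hypothesis is needed (Tonelli). Now substitute $t=\log r$, so that $r=e^{t}$ and $r^{2n-1}\,dr=e^{(2n-1)t}e^{t}\,dt=e^{2nt}\,dt$, with $t$ sweeping $(-\infty,0)$ as $r$ sweeps $(0,1)$; this gives exactly the asserted identity. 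The only thing to get right is the Jacobian exponent: the real dimension is $2n$, so the radial factor is $r^{2n-1}$, which combines with $dr=e^{t}\,dt$ to produce $e^{2nt}$.

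For (b) I would apply Proposition 2.8 with $\Omega=B$ the unit ball, for which $u_\Omega=\log|z|$, so that $\phi=f(u_\Omega)$; the proposition then gives $\int_B(-\phi)(dd^c\phi)^n=2^{-n}\int_{-\infty}^0(f')^{n+1}(t)\,dt$. For self-containedness I would recall its short proof in this case: by Lemma 2.9, the function $\sigma(s):=\int_{\log|z|<s}(dd^c\phi)^n$ equals $2^{-n}f'(s)^n$, and it is the distribution function of the push-forward of $(dd^c\phi)^n$ under $z\mapsto\log|z|$; assuming first that $\phi$ is smooth and that $f$ is constant for $s$ large negative (the general case following by approximation), an integration by parts gives $\int_B(-\phi)(dd^c\phi)^n=\int_{-\infty}^0(-f(s))\,d\sigma(s)=\int_{-\infty}^0 f'(s)\,\sigma(s)\,ds$, the boundary contributions vanishing because $f(0)=0$ and $\sigma(s)=2^{-n}f'(s)^n\to 0$ as $s\to-\infty$; substituting $\sigma$ finishes the computation.

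Neither part presents a real obstacle; both are essentially bookkeeping. The two places to be careful are, first, the non-smooth case in (b), where the Monge-Ampere measure $(dd^c\phi)^n$ and the integration by parts at $-\infty$ must be justified exactly as in the proofs of Lemma 2.9 and Proposition 2.8, by approximating $f$ from above by smooth increasing convex functions that are constant near $-\infty$; and, second, the normalisation constants, namely that the $2^{-n}$ in (b) is the total mass of $(dd^c\log|z|)^n=2^{-n}\delta_0$ at the origin (as established in the proof of Lemma 2.9), while $a_n$ in (a) is the Euclidean surface area of $S^{2n-1}$.
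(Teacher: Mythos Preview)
Your proposal is correct and matches the paper's own proof essentially line for line: for (a) the paper writes $\int_B F(\phi)=\int^0 F\circ f\, d\sigma(t)$ with $\sigma(t)=|\{|z|\le e^t\}|=(\pi^n/n!)e^{2nt}$, which is exactly your spherical-coordinates computation after the substitution $t=\log r$; for (b) the paper simply cites Proposition~2.8, which is precisely what you do (your additional recap of its proof is accurate but not needed).
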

\begin{proof} The first formula follows from
$$
\int_BF(\phi)=\int^0 F\circ f d\sigma(t)
$$
where $\sigma=|\{z; |z|\leq e^t\}= \pi^n/n! e^{2nt}$. The second formula is a special case of Proposition 2.8. 

\end{proof}

Applying the scaled version of Moser's result with 
$$
-w= f E^{-1/(n+1)}2^{-n/(n+1)}
$$
and 
$\kappa =2n$ the theorem follows. 

\bigskip

To relate this to Moser-Trudinger inequalities of the form studied in \cite{Ber-Ber} we start with the elementary inequality for positive numbers $x$ and $\xi$
$$
x\xi\leq \frac{1}{n+1} x^{n+1} +\frac{n}{n+1}\xi^{(n+1)/n}
$$
(valid since $(n+1)$ and $(n+1)/n$ are dual exponents). This implies
$$
\xi\leq  \frac{1}{n+1} x^{n+1} +\frac{n}{n+1}\xi^{(n+1)/n}/x^{(n+1)/n}.
$$
Choose $x$ so that 
$$
x^{n+1}=E/(n+1)^n
$$
and take $\xi =(-\phi)$. Then
$$
-\phi\leq \frac{1}{(n+1)^{n+1}}E +n E^{-1/n}(-\phi)^{(n+1)/n}.
$$
Therefore Theorem 3.1 implies the sharp Moser-Trudinger inequality for $S^1$-invariant functions from \cite{Ber-Ber}
$$
\log\int e^{-\phi}\leq \frac{1}{(n+1)^{n+1}}\E(\phi) + B,
$$
with $B=\log C$ , $C$ the universal constant in Moser's estimate. 
\section{Symmetrization of convex functions}
First we note the following analog of Theorem 2.2.
\begin{thm} Let $\phi$ be a convex function defined in a convex domain $\Omega$ in $\R^n$ and let $\hat\phi$ be its Schwarz symmetrization. Then $\hat\phi$ is also convex.
\end{thm}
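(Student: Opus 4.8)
The plan is to transcribe the proof of Theorem 2.2, with the Brunn--Minkowski inequality playing the role that Theorem 2.1 played there. Write the Schwarz symmetrization as $\hat\phi(x)=g(|x|)$ with $g$ increasing. Since a radial function $x\mapsto g(|x|)$ is convex exactly when $g$ is convex and increasing, and $g$ is increasing by the very definition of the symmetrization, it suffices to prove that $g$ is convex. Set $\sigma(t):=|\{x\in\Omega;\ \phi(x)<t\}|$. Equidistribution of $\phi$ and $\hat\phi$ gives, for $t$ in the range of $\phi$ over the interior of $\Omega$,
$$
\sigma(t)=|\{x;\ g(|x|)<t\}|=c_n\,\bigl(g^{-1}(t)\bigr)^n ,
$$
so $g^{-1}(t)=(\sigma(t)/c_n)^{1/n}$. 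As in the proof of Theorem 2.2, $g$ is convex precisely when its inverse $g^{-1}$ is concave, i.e.\ precisely when $\sigma^{1/n}$ is concave.

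The key step is therefore to prove that $\sigma^{1/n}$ is concave, and this is exactly where convexity (rather than mere pseudoconvexity, as in Section 2) is used. Consider the subset of $\R^{n+1}$
$$
\mathcal D=\{(t,x);\ x\in\Omega\ \text{and}\ \phi(x)<t\}.
$$
Because $\Omega$ is convex and $\phi$ is convex, $\mathcal D$ is convex, and its $n$-dimensional slice at height $t$ is $D_t=\{x;\ \phi(x)<t\}$, with $|D_t|=\sigma(t)$. Convexity of $\mathcal D$ yields $D_{(t_0+t_1)/2}\supseteq\frac{1}{2}(D_{t_0}+D_{t_1})$, and the Brunn--Minkowski inequality then gives $\sigma((t_0+t_1)/2)^{1/n}\ge\frac{1}{2}\bigl(\sigma(t_0)^{1/n}+\sigma(t_1)^{1/n}\bigr)$; since $\sigma$ is monotone, this midpoint concavity upgrades to concavity. (This is precisely the stronger form of Brunn--Minkowski noted after Theorem 2.1.) Combining with the previous paragraph, $g^{-1}$ is concave and increasing, hence $g$ is convex, and $\hat\phi$ is convex.

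What remains is routine book-keeping, and there is no real obstacle. On the part of the ball where $c_n r^n\le|\{\phi=\min\phi\}|$ one sets $g(r)=\min\phi$, exactly as in Lemma 2.2, and this constant piece glues convexly onto the rest; one also checks, as in Lemma 2.2, that $\sigma$ is strictly increasing on the active range $(\inf\phi,\sup\phi)$ — here simply because $\phi$ is continuous on the interior of $\Omega$ — so that $g^{-1}$ is a genuine inverse. As usual one may first assume $\phi$ smooth and strictly convex and then obtain the general case by approximation. The only point that really has to be checked is that the auxiliary region $\mathcal D\subset\R^{n+1}$ is convex with the stated slices; once this is in place, Brunn--Minkowski does all the work, mirroring the use of Theorem 2.1 in the proof of Theorem 2.2.
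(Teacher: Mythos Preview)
Your proof is correct and follows essentially the same route as the paper: write $\hat\phi=g(|x|)$, reduce convexity of $g$ to concavity of $\sigma^{1/n}$, and obtain the latter from the Brunn--Minkowski inequality applied to the slices of the convex epigraph $\mathcal D=\{(t,x);\ x\in\Omega,\ \phi(x)<t\}$. You add a bit more detail (the midpoint inclusion and the book-keeping about the constant piece near the minimum), but the argument is the same.
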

This fact should be well known but we include a proof in order to  emphazise the similarity with Theorem 2.2. By definition $\hat\phi(x)=g(|x|)$ for some increasing function $g$ and we need to prove that $g$ is convex (notice the change in convention as compared with the complex case where we wrote $\hat\phi(z)=f(\log |z|)$). As before
$$
\sigma(t):=|\{x\in\Omega; \phi(x)<t\}|= a_n (g^{-1}(t))^n,
$$
so it suffices to prove that $\sigma^{1/n}$ is {\it concave}. But if we put
$$
\D:=\{ (t,x); x\in\Omega \, \text{and}\, \phi(x)-t<0\},
$$
$\sigma(t)$ is the volume of the slices $D_t$. By the Brunn-Minkowski theorem (see section 2) it follows that $\sigma^{1/n}$ is concave, and we are done. 

We next state the real variable analog of Theorem 2.3.
\begin{thm}
Let $\phi$ be a convex function in the ball, continuous on the closed ball and vanishing on the boundary. Let $\hat\phi$ be its Schwarz symmetrization. Then
$$
\E(\hat\phi)\leq\E(\phi).
$$
\end{thm}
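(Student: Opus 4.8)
The plan is to imitate the proof of Theorem 2.3 step by step, with the Brunn--Minkowski inequality — in the sharp form that $\sigma^{1/n}$, not merely $\log\sigma$, is concave — taking over the role played there by Theorem 2.1; since Brunn--Minkowski applies to the slices of an arbitrary convex body, no extra symmetry hypothesis is needed, exactly as anticipated in the discussion following Theorem 2.1. First I would carry out the usual reductions: by approximation (approximate $\phi$ by smooth strictly convex functions and use continuity of $\E$ and of Schwarz symmetrization under monotone limits) it suffices to treat $\phi$ smooth and strictly convex up to $\partial B$, and after a $\max$-construction we may even assume $\phi$ agrees near $\partial B$ with a fixed model $\frac{A}{2}(\abs{x}^2-1)$ for large $A$. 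Put $\phi_1=\phi$ and let $\phi_0=f(\abs{x})$ be a smooth strictly convex radial function, vanishing on $\partial B$ and equal to the model near the boundary; since the Hessian of $f(\abs{x})$ has eigenvalues $f''(r)$ and $f'(r)/r$, its Monge--Amp\`ere density is radial and hence can be written $F(\phi_0)$ for a smooth positive $F$ — the real counterpart of the reference equation $(\ddc\phi_0)^n=F(\phi_0)$ used in the complex proof. Finally connect $\phi_0$ and $\phi_1$ by a geodesic $\phi_t$ in the space of convex functions vanishing on $\partial B$, that is, $\Phi(x,t):=\phi_t(x)$ is convex on $B\times[0,1]$ and satisfies the homogeneous real Monge--Amp\`ere equation $\det D^2_{(x,t)}\Phi=0$; as in the complex case this geodesic may be taken of class $C^{1,1}$ (the real counterpart of Chen's theorem, and in fact more elementary), and since its boundary data coincide with the model, it equals the model near $\partial B$, so $\dot\phi_t\equiv0$ there and the integrations by parts below are legitimate.

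The three structural facts used in the complex proof have the same one-line derivations in the real setting. Differentiating $\E(\phi_t)=\int_B(-\phi_t)\,MA(\phi_t)$ in $t$ and integrating by parts — using $\dot\phi_t=0$ on $\partial B$ and that the cofactor matrix of a Hessian is divergence free — gives $\frac{d}{dt}\E(\phi_t)=(n+1)\int_B(-\dot\phi_t)\,MA(\phi_t)$ whenever $\phi_t$ is $C^1$ in $t$; differentiating once more and using $\det D^2_{(x,t)}\Phi=\det D^2_x\phi_t\cdot\bigl(\ddot\phi_t-\langle\nabla_x\dot\phi_t,(D^2_x\phi_t)^{-1}\nabla_x\dot\phi_t\rangle\bigr)$ shows that $\E(\phi_t)$ is affine along geodesics and concave along \emph{subgeodesics} (curves with $(x,t)\mapsto\phi_t(x)$ convex). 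The analog of Proposition 2.5 — that $\hat\phi_t$ is again a subgeodesic when $\phi_t$ is — is the single point where Brunn--Minkowski enters: if $(x,s)\mapsto\phi_s(x)$ is convex then $\{(x,s,t)\in\R^{n+2}:\phi_s(x)<t\}$ is a convex body whose slice over $(s,t)$ is the sublevel set $\{x:\phi_s(x)<t\}$, of volume $A(s,t)$, so Brunn--Minkowski gives that $A(s,t)^{1/n}$ is jointly concave in $(s,t)$; writing $\hat\phi_s(y)=g_s(\abs{y})$ as in Theorem 4.1, equidistribution gives $g_s^{-1}(t)=(A(s,t)/a_n)^{1/n}$, which is concave and strictly increasing in $t$, so by Lemma 2.4 its inverse with respect to $t$ — precisely $g_s$ — is jointly convex in $(s,t)$, and since $g_s$ is increasing and $\abs{y}$ is convex, $(s,y)\mapsto\hat\phi_s(y)$ is convex.

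The endgame is then verbatim the complex one. Set $g(t)=\E(\phi_t)$, which is affine, and $h(t)=\E(\hat\phi_t)$, which is concave; since $\phi_0$ is radial, $g(0)=h(0)$, so $\E(\hat\phi)=h(1)\le g(1)=\E(\phi)$ will follow once $h'(0)\le g'(0)$. By the choice of $\phi_0$, $g'(0)=(n+1)\int_B(-\dot\phi_0)F(\phi_0)=(n+1)\frac{d}{dt}\big|_{t=0}\int_B(-G(\phi_t))$ with $G'=F$, and $\int_B G(\phi_t)=\int_B G(\hat\phi_t)$ for all $t$ because $\phi_t$ and $\hat\phi_t$ are equidistributed. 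The only subtlety, exactly as in the complex case, is that $\hat\phi_t$ need not be $C^1$; we get around this by dominating it from above: approximate $\hat\phi_1$ by a smooth radial convex function $\psi_1\ge\hat\phi_1$, let $\psi_t$ be the (smooth, radial) geodesic from $\psi_0=\phi_0$ to $\psi_1$, which satisfies $\psi_t\ge\hat\phi_t$ since a geodesic dominates every subgeodesic with the same or smaller endpoints, so that $-\dot\psi_0\le-\dot{\hat\phi}_0$ pointwise (one-sided $t$-derivative at $0$). Then $k(t):=\E(\psi_t)$ is affine, $k(0)=g(0)$, and $k'(0)=(n+1)\int_B(-\dot\psi_0)F(\phi_0)\le(n+1)\int_B(-\dot{\hat\phi}_0)F(\phi_0)$, which by $\hat\phi_0=\phi_0$, dominated convergence, and equidistribution equals $(n+1)\frac{d}{dt}\big|_{t=0}\int_B(-G(\phi_t))=g'(0)$; hence $k(1)\le g(1)$, and letting $\psi_1\to\hat\phi_1$ and using continuity of $\E$ gives $h(1)\le g(1)$. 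The \textbf{main obstacle} is purely technical and identical to the one in the complex proof: providing the $C^{1,1}$ regularity of the geodesics, the boundary matching, the justification of differentiating $\int_B G(\hat\phi_t)$ under the integral sign at $t=0$, and the continuity of $\E$ used in the limiting arguments. By contrast, the one genuinely new ingredient — that Schwarz symmetrization sends subgeodesics to subgeodesics — is immediate from Brunn--Minkowski together with Lemma 2.4.
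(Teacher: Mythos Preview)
Your proposal is correct and follows exactly the route the paper itself prescribes: the paper's proof of this theorem is the single remark that it is ``proved in a way completely parallel to the complex case'' with geodesics/subgeodesics of convex functions, and you have accurately spelled out that parallel argument, replacing Theorem 2.1 by the Brunn--Minkowski inequality (in its $\sigma^{1/n}$ form) at the one place it is needed, namely the analog of Proposition 2.6. Aside from a harmless off-by-one in your cross-references (what you call Theorem 2.3, Lemma 2.4, Proposition 2.5 are the paper's 2.4, 2.5, 2.6), there is nothing to correct.
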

This is proved in a way completely parallell to the complex case, so we shall not give the details. We define geodesics and subgeodesics in the space of convex functions as before. Then the real energy is concave along subgeodesics and affine along geodesics as before and the analog of the formula for the first order derivative also holds. We can therefore repeat the proof practically verbatim.

\subsection{ Other domains}

We have already seen in section 2 that in the complex case, the energy does not in general decrease under Schwarz symmetrization if we consider functions defined on domains different than the ball. In the real setting, the first counterexample to the same effect was given by Tso, \cite{Tso}. We shall first  discuss Tso's counterexample, and start by giving the example in a more general form. In the next theorem appears the {\it Mahler volume} of a convex set $\Omega$ containing the origin. It is defined as
$$
M(\Omega):=|\Omega||\Omega^\circ|,
$$
where $\Omega^\circ$ is the polar body of $\Omega$. In the sequel we will write $\E_\Omega$ for the energy of functions defined in $\Omega$.

\begin{thm}
Let $\Omega$ be a bounded convex domain in $\R^n$ containing the origin, and let $\mu_\Omega$ be the Minkowski functional of $\Omega$. Let $u$ be a convex function in $\Omega$ of the form $u(x)=f(\mu_\Omega(x))$, and let $\hat u$ be its Schwarz symmetrization. Then
$$
M(\Omega)^{-1}\E_\Omega(u)=M(B_\Omega)^{-1}\E_B(\hat u)
$$
(where $B_\Omega$ is the ball of the same volume as $\Omega$).
\end{thm}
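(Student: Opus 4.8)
The plan is to compute $\E_\Omega(u)$ explicitly as a one-variable integral of $f'$, in exact analogy with Proposition 2.8, observe that the Schwarz symmetrization $\hat u$ is again of this special form but over the equal-volume ball, apply the same formula to it, and then divide by the two Mahler volumes. The one new ingredient, compared with Proposition 2.8, is that the Monge-Ampère mass of $u$ now carries a factor $|\Omega^\circ|$ instead of the dimensional constant $2^{-n}$, and this is precisely what makes the Mahler volume appear.

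First I would prove the real counterpart of Lemma 2.9: for $f$ convex and increasing with $f(1)=0$, and $u=f(\mu_\Omega)$,
\[
\int_{\{\mu_\Omega<s\}}MA(u)=|\Omega^\circ|\,f'(s)^n,\qquad 0<s\le1.
\]
The point is that $\mu_\Omega$ is convex and positively $1$-homogeneous, so $MA(\mu_\Omega)$ is concentrated at the origin, and the subdifferential $\partial\mu_\Omega(0)$ equals, essentially by the definition of the polar body, the set $\Omega^\circ$; hence $MA(\mu_\Omega)=|\Omega^\circ|\,\delta_0$. Composing, one finds that the subdifferential of $u$ over the sublevel set $\{\mu_\Omega<s\}$ is $f'(s)\,\Omega^\circ$ (the origin contributes $f'(0)\,\Omega^\circ$ and the shell $\{0<\mu_\Omega<s\}$ fills this out to $f'(s)\,\Omega^\circ$), whose volume is $f'(s)^n|\Omega^\circ|$; this is the Alexandrov description $MA(u)(E)=|\partial u(E)|$. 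Alternatively one can argue verbatim as in Lemma 2.9 by Stokes on the level surfaces $\{\mu_\Omega=s\}$, using that $\mu_\Omega$ solves the homogeneous Monge-Ampère equation off the origin and that $\int_{\{\mu_\Omega<s\}}MA(\mu_\Omega)=|\Omega^\circ|$.

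Next I would deduce the energy formula. Putting $\nu(s):=\int_{\{\mu_\Omega<s\}}MA(u)=|\Omega^\circ|\,f'(s)^n$ and writing $-f(s)=\int_s^1 f'(r)\,dr$ (since $f(1)=0$), Tonelli's theorem gives
\[
\E_\Omega(u)=\int_{[0,1]}(-f(s))\,d\nu(s)=\int_0^1 f'(r)\,\nu(r)\,dr=|\Omega^\circ|\int_0^1(f')^{n+1}\,dt,
\]
the Tonelli step absorbing the possible atom of $\nu$ at the origin (as in Proposition 2.8 one could instead first take $f$ constant near $0$ and pass to the general case by approximation). Now since $\{\mu_\Omega<c\}=c\,\Omega$, the sublevel sets of $u$ are dilates of $\Omega$, of volume $(f^{-1}(t))^n|\Omega|$, so equidistribution forces $\hat u(x)=f(|x|/R)$ with $c_n R^n=|\Omega|$; that is, $\hat u=f(\mu_{B_\Omega})$ where $B_\Omega=\{|x|<R\}$. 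Applying the energy formula to $\hat u=f(\mu_{B_\Omega})$ on $B_\Omega$ gives $\E_B(\hat u)=|B_\Omega^\circ|\int_0^1(f')^{n+1}\,dt$, and since $M(\Omega)=|\Omega||\Omega^\circ|$ while $|B_\Omega|=|\Omega|$,
\[
M(\Omega)^{-1}\E_\Omega(u)=\frac{1}{|\Omega|}\int_0^1(f')^{n+1}\,dt=\frac{1}{|B_\Omega|}\int_0^1(f')^{n+1}\,dt=M(B_\Omega)^{-1}\E_B(\hat u).
\]

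The only genuinely non-formal step is the Monge-Ampère computation of the first displayed identity, and within it the identification of the mass of $MA(\mu_\Omega)$ at the origin with the volume $|\Omega^\circ|$ of the polar body; this is the heart of the matter and the place where the Mahler volume enters naturally. The remaining technical point is to make that computation rigorous when $\Omega$ and $f$ are merely convex rather than smooth; I would handle this by approximating $\Omega$ by smooth, strictly convex bodies and $f$ by smooth convex functions and invoking the weak continuity of the Monge-Ampère operator, or else by phrasing everything in terms of subdifferentials from the start.
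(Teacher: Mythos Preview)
Your proposal is correct and follows essentially the same route as the paper: the paper packages the argument into Lemmas 4.5, 4.6, and 4.7, which are exactly your mass identity $\int_{\{\mu_\Omega<s\}}MA(u)=|\Omega^\circ|f'(s)^n$, the resulting energy formula $\E_\Omega(u)=|\Omega^\circ|\int_0^1(f')^{n+1}$, and the identification $\hat u=f(\mu_{B_\Omega})$. The only cosmetic difference is in the proof of the mass identity: the paper argues via the gradient map $x\mapsto\nabla u(x)$ as a change of variables and then reduces by homogeneity to $f(s)=s$, whereas you phrase the same computation through the Alexandrov description and the observation $\partial\mu_\Omega(0)=\Omega^\circ$; these are two ways of saying the same thing.
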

Notice that we could as well have divided by just $|\Omega^\circ|$ instead of the Mahler volume, since the volumes of $\Omega$ and $B_\Omega$ are automatically equal, but the Mahler volume seems to simplify a little below. 
From the theorem we see that if 
$$
\E_B(\hat u)\leq\E_\Omega(u)
$$
it follows that we have an inequality for the Mahler volumes
$$
M(B)\leq M(\Omega).
$$
This inequality fails in a very strong way. Indeed, if we assume that $\Omega$ is also symmetric so that $-\Omega=\Omega$, then {\it Santal\`o's inequality}, \cite{Santalo},  says that the opposite is true
$$
M(B)\geq M(\Omega).
$$ 
(Tsos's counterexample is the case of Theorem 4.3 when $\Omega$ is a simplex.)
It therefore seems that in the real case it is natural to normalize the energy by dividing by the Mahler volume of the domain. Notice that this is a difference as compared to the complex setting, where Theorem 4.3 holds without normalization. The reason for this is that in the case of $\R^n$, the Minkowski functional of a convex domain $\Omega$ satisfies the equation
$$
MA(\mu_\Omega)= |\Omega^0|.
$$
On the other hand in $\C^n$ we have if $\Omega$ is a balanced domain that
$$
(dd^c\log\mu_\Omega)^n=(dd^c\log |z|)^n= 2^{-n}\delta_0,
$$
where $\delta_0$ is a point mass at the origin, and thus is independent of the domain.

The question then becomes if 
\be
M(\Omega)^{-1}\E_\Omega(u)\geq M(B)^{-1}\E_B(\hat u)
\ee
for any convex function $u$ on $\Omega$ that vanishes on the boundary. 

Just like in the complex case we define for a convex function $u$ defined on some  convex domain $L$, its $\Omega$-symmetrization $S_\Omega(u)$ as the unique function, equidistributed with $u$ which can be written
$$
S_\Omega(u)=f(\mu_\Omega).
$$
Note that the $\Omega$-symmetrization of $u$ is the same as the $\Omega'$-symmetrization if $\Omega$ and $\Omega'$ are homothetic. Moreover,
since 
$$
|\{S_\Omega(u)<0\}|=|\{u<0\}|
$$
 $S_\Omega(u)$ vanishes on the boundary of a multiple $s\Omega$ of $\Omega$, with $s$ chosen so that $s\Omega$ has the same volume as $L$, if $u$ vanishes on the boundary of $L$.
Notice that if $\Omega$ is a ball, centered at the origin,  $S_\Omega$  is just the Schwarz symmetrization.

\bigskip
 
In terms of $\Omega$-symmetrizations, Theorem 4.3 says that the normalized energy of all $\Omega$-symmetrizations coincide:
\be
M(\Omega)^{-1}\E_\Omega(S_\Omega(u))=M(\Omega')^{-1}\E_{\Omega'}(S_{\Omega'}(u)),
\ee
if $\Omega$ and $\Omega'$ are two convex domains.

The desired inequality (4.1) thus means that
\be
\E_\Omega(S_\Omega(u))\leq \E_\Omega(u)
\ee
for convex functions $u$ on $\Omega$ that vanish on the boundary. This would be the analog of Theorem 4.2 for general convex domains, and it is precisely the same question that we discussed in the complex case. Just like in the complex case we shall now see that this holds only for ellipsoids. Most of the argument is completely parallell to the complex case and will be omitted. Only the last part, involving K\"ahler-Einstein metrics has to be changed and we shall now describe how this is done. 

\bigskip

As in the complex case we see that if the symmetrization inequality holds, then $\mu_\Omega $, the Minkowski functional of $\Omega$ satsifies a condition of the form: There is a convex function of $\mu_\Omega$ such that
$$
u=f(\mu_\Omega)
$$
satisfies an equation
$$
MA(u)= F(u)
$$
for some function $F$. To see the meaning of this more explicitly we resort to the complex formalism. Define $u$ and $\mu_\Omega$ on $\mathbb C^n$ by putting
$u(z)=u(x)$ etc, i e by letting all functions involved be independent of the imaginary part of $z$. Then
$$
MA(u)d\lambda(z)= C(dd^c u)^n
$$
where $d\lambda$ is the standard volume form on $\mathbb C^n$. Since $MA(\mu_\Omega)=0$ outside the origin, it follows if $u=f(\mu_\Omega)$ that
$$
c'(dd^c u)^n=(f'(\mu))^{n-1}f''(\mu)d\mu\wedge d^c \mu\wedge (dd^c \mu)^{n-1}.
$$
Hence we see that
$$
d\mu\wedge d^c \mu\wedge (dd^c \mu)^{n-1}=G(\mu) d\lambda
$$
for $x\neq 0$, where we write $\mu$ instead of $\mu_\Omega$ since $\Omega$ is now fixed. Since $\mu$ is homogenous of degree 1, $d\mu$ is homogenous of degree zero, and $dd^c\mu$ is homogenous of degree -1. Therefore the left hand side is homogenous of degree $(n-1)$ so we can take $G(\mu)=\mu^{1-n}$. It also follows from this equation that {\it any} function $u=f(\mu)$, with $f$ convex and strictly increasing must satsify an equation
$$
MA(u)= F(u)
$$
for some function $F$. Take $u=\mu^2$. Then $F(u)$ must be homogenous of degree zero, so $F(u)$ is a constant. All in all,  $u=\mu^2$ is outside of the origin a smooth convex function that satisfies
$$
MA(u)= C.
$$
Moreover, the second derivatives of $u$ stay bounded near the origin, so $u$ solves the same Monge-Ampere equation on all of $\mathbb R^n$ in a generalized sense. We can then apply a celebrated theorem by J\"orgens, Calabi and Pogorelov, (see \cite{Pogorelov}) to conclude that $u$ is a quadratic form. We have thus proved the next theorem.
\begin{thm} Let $\Omega$ be a convex domain containing the origin. Assume that for any convex function in $\Omega$, $v$ that vanishes on the boundary the symmetrization inequality
$$
M(\Omega)^{-1}\E_\Omega(v)\geq M(B)^{-1}\E_B(\hat v)
$$
holds. Then $v$ is an ellipsoid.
\end{thm}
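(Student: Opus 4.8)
The plan is to follow the complex rigidity argument (the analog of Theorem 2.11 above), the only change being that the J\"orgens--Calabi--Pogorelov theorem now plays the role taken there by the Bando--Mabuchi uniqueness theorem. Almost all the ingredients have already been assembled in the discussion preceding the statement, so the task is mainly to organize them and to supply the two pieces that were there omitted: the variational reduction and the regularity remarks.

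First I would repeat, essentially verbatim from Section 2, the geodesic argument that was only sketched there. By the identity (4.2) for the normalized energies of $\Omega$-symmetrizations, the hypothesis $M(\Omega)^{-1}\E_\Omega(v)\ge M(B)^{-1}\E_B(\hat v)$ is equivalent to $\E_\Omega(S_\Omega(v))\le\E_\Omega(v)$ for every admissible $v$. Hence the Mahler-normalized functional $\F_\Omega(\phi)=\log\int_\Omega\ephi-c'M(\Omega)^{-1}\E_\Omega(\phi)$ increases under $\Omega$-symmetrization, while $\F_\Omega(S_\Omega(v))=\F_B(\hat v)$ because $v$ and $\hat v$ are equidistributed and (4.2) equates the normalized energies. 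Since $\E_\Omega$ is affine along geodesics of convex functions vanishing on the boundary and $\log\int e^{-\phi_t}$ is concave along subgeodesics (Pr\'ekopa's theorem, the real counterpart of the result of \cite{Berg}), $\F_B$ is concave along such geodesics, so its critical point $\psi_0$ -- the radial solution of the real K\"ahler--Einstein equation $MA(\psi_0)=c\,e^{-\psi_0}$ -- is a maximizer. Thus $\F_\Omega(v)\le\F_B(\hat v)\le\F_B(\psi_0)$ for all admissible $v$, and the supremum of $\F_\Omega$ is attained at $u:=f_0(\mu_\Omega)$, the $\Omega$-symmetrization of $\psi_0$. Provided $\Omega$ is strictly convex -- so that $u$ is strictly convex away from the origin and competitors near $u$ stay convex, exactly as in Theorem 2.11 -- the Euler--Lagrange equation of $\F_\Omega$ forces $u$ to solve a Monge--Amp\`ere equation $MA(u)=F(u)$.

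Next I would extract the rigidity by the computation displayed just above the theorem. Regarding $u$ and $\mu_\Omega$ as functions on $\Cn$ independent of the imaginary variables, one has $MA(u)\,d\lambda=C(\ddc u)^n$; since $(\ddc\mu_\Omega)^n=0$ off the origin, expanding $(\ddc f_0(\mu_\Omega))^n$ leaves only the term $c_1(f_0'\circ\mu_\Omega)^{n-1}(f_0''\circ\mu_\Omega)\,d\mu_\Omega\wedge d^c\mu_\Omega\wedge(\ddc\mu_\Omega)^{n-1}$, and comparing with $MA(u)=F(u)$ gives $d\mu_\Omega\wedge d^c\mu_\Omega\wedge(\ddc\mu_\Omega)^{n-1}=G(\mu_\Omega)\,d\lambda$ away from the origin. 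Homogeneity of $\mu_\Omega$ forces $G(\mu)=c\,\mu^{1-n}$, and substituting the profile $f(t)=t^2$ then shows that $MA(\mu_\Omega^2)$ equals a genuine positive constant off the origin. Because $\mu_\Omega^2$ is convex with bounded second derivatives near $0$, it is a global Alexandrov solution of $\det D^2 w=C$ on $\Rn$; by the J\"orgens--Calabi--Pogorelov theorem it is therefore a quadratic polynomial, and being nonnegative with a minimum at the origin it is a positive definite quadratic form $q$ (definite since $\Omega$ is bounded). Hence $\Omega=\{\mu_\Omega<1\}=\{q<1\}$ is an ellipsoid.

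The step I expect to be the genuine obstacle is the first one: showing that $\F_\Omega$ really does attain its supremum at $f_0(\mu_\Omega)$ and that the Euler--Lagrange equation there is the Monge--Amp\`ere equation $MA(u)=F(u)$. This requires the $C^{1,1}$ regularity of geodesics in the space of convex functions vanishing on $\partial\Omega$, affineness of $\E_\Omega$ and concavity of $\log\int e^{-\phi}$ along them, and enough convexity of $\Omega$ that perturbations of $f_0(\mu_\Omega)$ remain convex -- which is why one should assume $\Omega$ strictly convex, the general case following by approximation. Once that reduction is secured the remainder is the routine homogeneity bookkeeping above together with the invocation of J\"orgens--Calabi--Pogorelov, and it is precisely here that the real argument parts ways with the complex one, where Bando--Mabuchi was used instead.
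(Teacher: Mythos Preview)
Your proposal is correct and follows essentially the same route as the paper: reduce the hypothesis to $\E_\Omega(S_\Omega(v))\le\E_\Omega(v)$ via (4.2), run the geodesic/variational argument parallel to Section~2.1 to force some $u=f(\mu_\Omega)$ to satisfy $MA(u)=F(u)$, then use homogeneity to get $MA(\mu_\Omega^2)=C$ and invoke J\"orgens--Calabi--Pogorelov. You are in fact more explicit than the paper on two points it deliberately leaves implicit: that in the real case the functional $\F$ must be built with the Mahler-normalized energy (so that $\F_\Omega(S_\Omega(v))=\F_B(\hat v)$ holds, Theorem~4.3 replacing Proposition~2.8), and that Pr\'ekopa's theorem supplies the concavity of $\log\int e^{-\phi_t}$ in place of the Bergman-kernel result from \cite{Berg}.
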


\bigskip

{\bf Remark:} We saw above that the condition on our domain is that $\mu=\mu_\Omega$ satisfies an equation
$$
d\mu\wedge d^c \mu\wedge (dd^c \mu)^{n-1}= C \mu^{1-n} d\lambda.
$$
One can show that this is equivalent to the condition that $\Omega$ is a stationary point for the Mahler functional
$$
M(\Omega)=|\Omega||\Omega^\circ|.
$$
Thus it follows from the J\"orgens- Calabi-Pogorelov theorem that any such stationary point is an ellipsoid. Notice that the two results are not equivalent though: in the case of the Mahler functional we know beforehand that our function $u=\mu^2$ grows quadratically at infinity, whereas the  
 J\"orgens- Calabi-Pogorelov theorem applies to any convex solution. At any rate, the analogy between the K\"ahler-Einstein condition in the complex case and the Mahler volume in the real case seems quite interesting.
\qed

We conclude with the proof of Theorem 4.3, which is proved more or less as in the complex case. Notice that the appearance of the factor $|\Omega^\circ|$ in the lemma is the main difference as compared to Proposition 2.8.
\begin{lma} Let $\Omega$ be a smoothly bounded convex domain containing the origin, with Minkowski functional $\mu_\Omega$. Let $u$ be a smooth convex function in $\Omega$ of the form $u(x)=f(\mu_\Omega(x))$, vanishing on the boundary so that $f(1)=0$. Then
$$
\sigma(s):=\int_{\mu_\Omega<s} MA(u)= f'(s)^n |\Omega^\circ|.
$$
\end{lma}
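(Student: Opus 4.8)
The plan is to run the argument of Lemma 2.9 in the real category, with the real Monge--Amp\`ere operator replacing $(dd^c\cdot)^n$ and with the point mass $2^{-n}\delta_0$ of $(dd^c\log|z|)^n$ replaced by the point mass $|\Omega^\circ|\,\delta_0$ of $MA(\mu_\Omega)$ — this substitution is precisely what manufactures the extra factor $|\Omega^\circ|$. The cleanest way to organize the computation is through the (sub)gradient map: by the defining property of the Monge--Amp\`ere measure of a convex function,
$$
\sigma(s)=\int_{\{\mu_\Omega<s\}}MA(u)=\bigl|\,\partial u(\{\mu_\Omega<s\})\,\bigr|,
$$
where $\partial u(E)=\bigcup_{x\in E}\partial u(x)$ is the image of the subdifferential; for our $u$, which is smooth on $\R^n\setminus\{0\}$, $\partial u(x)=\{\nabla u(x)\}$ away from the origin. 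So the whole lemma reduces to identifying the image set $\partial u(\{\mu_\Omega<s\})$ as $f'(s)\,\Omega^\circ$, up to a set of measure zero.

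First I would collect the standard facts about $\mu:=\mu_\Omega$: it is convex and $1$-homogeneous, $\{\mu<s\}=s\Omega$, and since $\partial\Omega$ is smooth $\mu$ is smooth on $\R^n\setminus\{0\}$ with $\nabla\mu$ homogeneous of degree $0$; hence $\nabla\mu$ is constant along rays and $\nabla\mu(x)=\nabla\mu\bigl(x/\mu(x)\bigr)$ with $x/\mu(x)\in\partial\Omega$. The geometric heart of the matter is that $\mu_\Omega$ is the support function of $\Omega^\circ$, so for $\omega\in\partial\Omega$ the vector $\nabla\mu(\omega)$ is the point of $\Omega^\circ$ where $\langle\omega,\cdot\rangle$ attains its maximum value $\mu(\omega)=1$; Euler's relation $\langle\omega,\nabla\mu(\omega)\rangle=1$ together with the bipolar identity $\Omega=(\Omega^\circ)^\circ$ then shows that $\omega\mapsto\nabla\mu(\omega)$ maps $\partial\Omega$ \emph{onto} $\partial\Omega^\circ$. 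At the origin one has instead the subdifferential $\partial\mu(0)=\Omega^\circ$ (so $MA(\mu)=|\Omega^\circ|\,\delta_0$, the real analogue of the Lelong/point mass of $\log|z|$), and, since $u=f(\mu)$ with $f$ convex and nondecreasing, $\partial u(0)=f'(0^+)\,\Omega^\circ$.

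Assembling these: off the origin $\nabla u(x)=f'(\mu(x))\,\nabla\mu(x)$, and as $f$ is convex and nondecreasing $f'$ is nonnegative and nondecreasing, mapping $(0,s)$ onto an interval with endpoints $f'(0^+)$ and $f'(s)$. Therefore
$$
\partial u(\{\mu<s\})=\partial u(0)\ \cup\ \{\,f'(r)\,\bar y : 0<r<s,\ \bar y\in\partial\Omega^\circ\,\},
$$
where the first set is the dilate $f'(0^+)\Omega^\circ$ and the second sweeps out $f'(s)\Omega^\circ\setminus f'(0^+)\Omega^\circ$ — here I use that $\Omega^\circ$ is convex with the origin in its interior, so dilating its boundary fills up the whole body. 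Up to boundary pieces of measure zero the union is $f'(s)\,\Omega^\circ$, whence $\sigma(s)=|f'(s)\Omega^\circ|=f'(s)^n|\Omega^\circ|$. (One could equally present this as two successive applications of Stokes' theorem, mirroring the proof of Lemma 2.9 verbatim, using the divergence structure $n\det D^2u=\sum_{ij}\partial_i\bigl((\mathrm{cof}\,D^2u)_{ij}\,\partial_j u\bigr)$ and the fact that on a level surface of $\mu$ only the $f'(s)\,dd^c\mu$-type terms survive, pulling out $f'(s)^n$.)

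I expect the only genuine work to be the bookkeeping near the origin — verifying $\partial u(0)=f'(0^+)\Omega^\circ$ and that the possible point mass of $MA(u)$ there is correctly accounted for — together with invoking the classical convex-geometry facts that $\mu_\Omega$ is the support function of $\Omega^\circ$ and that its gradient maps $\partial\Omega$ onto $\partial\Omega^\circ$. None of this is deep, but it is exactly where the smoothness of $\partial\Omega$ is used, and it is what produces the factor $|\Omega^\circ|$ that distinguishes this lemma from Proposition 2.8.
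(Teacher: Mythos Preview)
Your proof is correct and uses essentially the same idea as the paper's: both interpret $\sigma(s)$ as the Lebesgue measure of the image of $\{\mu_\Omega<s\}$ under the (sub)gradient map of $u$, and then identify that image with $f'(s)\,\Omega^\circ$. The paper's execution is a touch slicker---rather than parametrizing the image explicitly as you do (sweeping $f'(r)\,\partial\Omega^\circ$ over $r\in(0,s)$ and handling the origin via $\partial u(0)$), it simply observes that the image depends only on the boundary values of $\nabla u$, hence only on $f'(s)$, and then reduces to the linear case $f(t)=t$, where $\nabla\mu_\Omega$ sends $\partial\Omega$ to $\partial\Omega^\circ$.
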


\begin{proof}
We may assume that $u$ is strictly convex. Then the map $x\mapsto \nabla u(x)$ is a diffeomorphism from $\{\mu_\Omega\leq s\}$ to a domain $U_s$ in $\R^n$, and by the change of variables formula
$$
\sigma(s)= |U_s|.
$$
But $U_s$ only depends on the gradient map restricted to the boundary of the set $\Omega_s$ where $\mu_\Omega <s$, i e on the value of $f'(s)$. We may therefore take $f(s)=as$  and even, by homogenuity, take $a=1$. Then the boundary of $\Omega_s$ is mapped to the boundary of $\Omega^\circ$, so the volume is $|\Omega_\circ|$. 
\end{proof}
\begin{lma}Under the same hypotheses as in the previous lemma,
$$
\E(u)=\int^1 f'(s)^{n+1}ds |\Omega^\circ|.
$$
\end{lma}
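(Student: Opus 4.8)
The plan is to push the Monge--Amp\`ere measure $MA(u)$ forward under the map $\mu_\Omega$ and then integrate by parts, exactly as in the proof of Proposition 2.8. Since $-u=(-f)\circ\mu_\Omega$ depends on $x$ only through the value $\mu_\Omega(x)$, and $\sigma(s)=\int_{\mu_\Omega<s}MA(u)$ is by definition the cumulative distribution function of the push-forward measure $(\mu_\Omega)_*MA(u)$, the change of variables formula gives
$$
\E(u)=\int_\Omega(-u)\,MA(u)=\int_0^1(-f(s))\,d\sigma(s).
$$
Here the integration stops at $s=1$ because $u$ vanishes on $\partial\Omega=\{\mu_\Omega=1\}$, i.e. $f(1)=0$, and because $MA(u)$ is carried by $\Omega=\{\mu_\Omega<1\}$.

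Next one disposes of the boundary terms in a Stieltjes integration by parts. As in Proposition 2.8 it is convenient to first assume that $f$ is constant for $s$ close to $0$; then $MA(u)$ vanishes in a neighbourhood of the origin, so $\sigma(s)=0$ for small $s$ (in accordance with Lemma 4.7, since then $f'(0)=0$), and the general case follows by approximating $f$ from above by functions of this type. Under this assumption,
$$
\E(u)=[-f(s)\sigma(s)]_{s=0}^{s=1}+\int_0^1 f'(s)\sigma(s)\,ds=\int_0^1 f'(s)\sigma(s)\,ds,
$$
the term at $s=1$ vanishing because $f(1)=0$ and the term at $s=0$ because $\sigma$ vanishes there. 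Substituting the identity $\sigma(s)=f'(s)^n|\Omega^\circ|$ of Lemma 4.7 now yields
$$
\E(u)=|\Omega^\circ|\int_0^1 f'(s)^{n+1}\,ds,
$$
which is the assertion.

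The only delicate point is the behaviour at the origin. For a genuinely smooth $u=f(\mu_\Omega)$ one automatically has $f'(0)=0$, since $\mu_\Omega$ itself is not differentiable at $0$; hence the push-forward measure $(\mu_\Omega)_*MA(u)$ has no atom at $s=0$, and $\sigma$ is a nondecreasing ($C^1$ when $f\in C^2$) function on $[0,1]$, which is exactly what legitimizes the integration by parts. The reduction to $f$ constant near $0$ is just a convenient way to package this, parallel to what was done in the complex case; apart from the extra factor $|\Omega^\circ|$ coming from Lemma 4.7 (the counterpart of the constant $2^{-n}$ in Proposition 2.8), the bookkeeping is identical.
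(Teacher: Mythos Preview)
Your argument is correct and follows exactly the same route as the paper's own proof: push forward $MA(u)$ along $\mu_\Omega$, integrate by parts in the Stieltjes integral, and substitute the formula for $\sigma(s)$ from the preceding lemma. The only slip is a cross-reference: the identity $\sigma(s)=f'(s)^n|\Omega^\circ|$ is Lemma~4.5 in the paper's numbering, not Lemma~4.7 (which is the statement $S_B(u)=f(\mu_B)$).
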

\begin{proof}
We have
$$
\E(u)=-\int^1 f(s)d\sigma(s)=\int^1 f'(s)\sigma(s)ds,
$$
so this follows from the previous lemma.
\end{proof}
\begin{lma} Let $\Omega$ and $u$ be as in the previous lemmas, and let $B$ be a ball centered at the origin  of the same volume as $\Omega$. Then
$$
S_B(u)=f(\mu_B).
$$
\end{lma}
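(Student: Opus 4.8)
The plan is to argue directly from the defining properties of the $B$-symmetrization: by definition $S_B(u)$ is the \emph{unique} function equidistributed with $u$ that can be written $g(\mu_B)$ for some increasing $g$, and since $B$ is a ball centered at the origin this is exactly the Schwarz symmetrization $\hat u$. So it suffices to check that the candidate $f(\mu_B)$ is of the required form and is equidistributed with $u$.

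First I would exploit the homogeneity of the Minkowski functional. Since $\mu_\Omega$ is homogeneous of degree one and $\Omega=\{\mu_\Omega<1\}$, the sublevel sets are dilates: $\{\mu_\Omega<s\}=s\Omega$ for $s>0$, so $|\{\mu_\Omega<s\}|=s^n|\Omega|$. Because $f$ is strictly increasing with $f(1)=0$ — forced by the standing hypotheses, $u=f(\mu_\Omega)$ being convex and vanishing on $\partial\Omega$ — we get for every $t$ in the range of $u$
$$
|\{x\in\Omega:u(x)<t\}|=|\{\mu_\Omega<f^{-1}(t)\}|=f^{-1}(t)^n\,|\Omega|.
$$
Running the identical computation for $B$, whose Minkowski functional $\mu_B(x)=|x|/R$ ($R$ the radius of $B$) is likewise homogeneous of degree one with $B=\{\mu_B<1\}$, gives
$$
|\{x\in B:f(\mu_B(x))<t\}|=|\{\mu_B<f^{-1}(t)\}|=f^{-1}(t)^n\,|B|.
$$

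Now the two right-hand sides coincide for all $t$ precisely because $|\Omega|=|B|$ by hypothesis; hence $f(\mu_B)$ and $u$ are equidistributed. Since $f$ is increasing and $\mu_B(x)=|x|/R$, the function $f(\mu_B)$ has the form $g(|x|)$ with $g$ increasing, so it qualifies as a Schwarz symmetrization, and by the uniqueness built into the definition of $S_B$ we conclude $S_B(u)=f(\mu_B)$. There is no real obstacle here; the only point needing a little care is the strict monotonicity of $f$ required to invert it, so that the sublevel sets of $u$ and of $f(\mu_B)$ are genuine dilates of $\Omega$ and $B$ respectively — this follows from convexity of $u=f(\mu_\Omega)$ exactly as in Lemma 4.7, or may be arranged by the same reduction to strictly convex $u$ used there, after which the general case follows by approximation.
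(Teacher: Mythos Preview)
Your proof is correct and follows essentially the same approach as the paper: both arguments compute the sublevel-set volumes $|\{f(\mu_\Omega)<t\}|=|\Omega|\,(f^{-1}(t))^n$ and $|\{g(\mu_B)<t\}|=|B|\,(g^{-1}(t))^n$ via homogeneity of the Minkowski functional, and then use $|\Omega|=|B|$ to conclude $g=f$. The only cosmetic difference is that the paper starts from the unknown $g$ and deduces $g^{-1}=f^{-1}$, whereas you verify directly that the candidate $f(\mu_B)$ is equidistributed with $u$ and invoke uniqueness; these are the same argument in different dress. (One small slip: your final paragraph cites ``Lemma 4.7'' for the strict monotonicity of $f$, but that is the present lemma --- you presumably mean Lemma 4.5 or the reduction to strictly convex $u$ used there.)
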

\begin{proof}
By definition, $S_B(u)=g(\mu_B)$ and
$$
|\{g(\mu_B)<t\}|=|\{f(\mu_\Omega\})<t\}|.
$$
The left hand side here is 
$$
|B| (g^{-1}(t))^n
$$
and the right hand side is 
$$
|\Omega| (f^{-1}(t))^n.
$$
Since $|B|=|\Omega|$, $g^{-1}=f^{-1}$, so we are done.
\end{proof}
Combining Lemma 4.6 and Lemma 4.5 we see that
$$
\E(u)/|\Omega^\circ|=\E(S_B)/|B^\circ|.
$$
This proves Theorem 4.3.
\def\listing#1#2#3{{\sc #1}:\ {\it #2}, \ #3.}

\end{document}